\definecolor{cmblue}{RGB}{64,97,190}
\definecolor{cmgreen}{RGB}{125,190,64}
\definecolor{cmred}{RGB}{190,64,64}
\definecolor{cmmagenta}{RGB}{126,64,190}
\newtheorem{theorem}{Theorem}[section]
\newtheorem{lemma}[theorem]{Lemma}
\newtheorem{proposition}[theorem]{Proposition}
\newtheorem{remark}[theorem]{Remark}
\theoremstyle{definition}
\newtheorem{definition}[theorem]{Definition}
\DeclareMathOperator{\Diam}{diam}
\DeclareMathOperator{\Span}{span}
\DeclareMathOperator{\Supp}{supp}
\newcommand{\isdef}{\mathrel{\mathrel{\mathop:}=}}
\newcommand{\dint}{\operatorname{d}\!}
\renewcommand{\subset}{\subseteq}
\renewcommand{\epsilon}{\varepsilon}
\newcommand{\nablabfm}{\boldsymbol{\nabla}}
\newcommand{\lambdabfm}{\boldsymbol{\lambda}}
\newcommand{\Cbbb}{\mathbb{C}}
\newcommand{\Nbbb}{\mathbb{N}}
\newcommand{\Rbbb}{\mathbb{R}}
\newcommand{\Zbbb}{\mathbb{Z}}
\newcommand{\Acal}{\mathcal{A}}
\newcommand{\Ocal}{\mathcal{O}}
\newcommand{\bfrak}{\mathfrak{b}}
\newcommand{\Afrak}{\mathfrak{A}}
\newcommand{\Bfrak}{\mathfrak{B}}
\newcommand{\Hfrak}{\mathfrak{H}}
\newcommand{\bbfm}{\mathbf{b}}
\newcommand{\cbfm}{\mathbf{c}}
\newcommand{\dbfm}{\mathbf{d}}
\newcommand{\ebfm}{\mathbf{e}}
\newcommand{\jbfm}{\mathbf{j}}
\newcommand{\ubfm}{\mathbf{u}}
\newcommand{\vbfm}{\mathbf{v}}
\newcommand{\Abfm}{\mathbf{A}}
\newcommand{\Bbfm}{\mathbf{B}}
\newcommand{\Ibfm}{\mathbf{I}}
\newcommand{\Mbfm}{\mathbf{M}}
\newcommand{\Tbfm}{\mathbf{T}}
\newcommand{\zerobfm}{\mathbf{0}}
\newcommand{\onebfm}{\mathbf{1}}
\newcommand{\xvec}{\boldsymbol{x}}
\title[On hybrid Sobolev and Besov spaces]{On Sobolev and 
Besov spaces of hybrid regularity}
\author{Helmut Harbrecht}
\author{Remo von Rickenbach}
\address{Helmut Harbrecht and Remo von Rickenbach,
	Departement Mathematik und Informatik,
	Universit\"at Basel,
Spiegelgasse 1, 4051 Basel}
\email{\{helmut.harbrecht,remo.vonrickenbach\}@unibas.ch}
\thanks{This research has been supported by the Swiss National Science 
	Foundation (SNSF) through the project ``Adaptive Boundary 
	Element Methods Using Anisotropic Wavelets''  under grant 
agreement no.\ 200021\_192041.}
\begin{document}

\begin{abstract}
	The present article is concerned with the nonlinear approximation of
	functions in the Sobolev space \(H^{q}\) with respect to a tensor-product, 
	or hyperbolic wavelet basis on the unit \(n\)-cube. Here, \(q\) is a real
	number, which is not necessarily positive. We derive Jackson and 
	Bernstein inequalities to obtain that the approximation classes 
	contain Besov spaces of hybrid regularity. 
	Especially, we show that all functions that can be approximated by classical
	wavelets are also approximable by tensor-product wavelets at least at
	the same rate.
	In particular, this implies that for nonnegative regularity, the classical
	Besov spaces of regularity \(B^{q+sn, \tau}_{\tau}\), with
	\(\frac{1}{\tau} = s +\frac{1}{2}\), are included in the Besov spaces of
	hybrid regularity \(\Bfrak^{q,s,\tau}_{\tau}\),
	with isotropic regularity \(q\) and additional mixed regularity \(s\).
\end{abstract}

\maketitle

\section{Introduction}\label{sec:Introduction}

If we want to approximate a function, there are many different methods. 
The best known and understood method is linear approximation. In this 
setting, given a function \(u \in V\), we take a sequence of nested, linear 
trial spaces \(V_j\subset V\) and intend to quantify the best approximation 
error \(\inf_{v_j \in V_j} \|u-v_j\|_V\) with respect to \(j\). In order to guarantee 
a certain convergence order, specific constraints on the target function 
\(u\) have to be satisfied. For example, to approximate a function 
\(u\) in the (isotropic) Sobolev space \(V = H^q(\square)\), with \(\square
\isdef (0, 1)^n\),
by piecewise polynomial functions of order \(d\) defined on a 
quasi-uniform mesh with support length \(2^{-j}\), we require that 
\(u \in H^s(\square)\) to expect the convergence order 
\(2^{-(s-q)j}\) for \(q \leq s \leq d\). When comparing the number 
of required trial functions with the accuracy, any function in 
\(H^{q+ns}(\square)\) is asymptotically approximable 
by \(N\) terms at the rate \(N^{-s}\). 

In the case of sufficiently high regularity, this approach works perfectly fine.
On the other hand, if only limited regularity of the function \(u\) is provided,
the optimal convergence rate cannot be realised.
Therefore, the framework of \emph{nonlinear approximation} was developed to
approximate a function adaptively, see e.g.\ \cite{DeV98} for an overview.
In this setting, the trial spaces used for approximating \(u\) are no longer
linear subspaces.
In particular, for \emph{best \(N\)-term approximation}, the trial space
\(V_N\) is the space of linear combinations from a dictionary,
consisting of at most \(N\) terms.
It is easy to see that the best nonlinear approximant consisting of \(N\) terms
is at least as good as every best linear approximant consisting of \(N\) terms, as
we can always restrict \(V_N\) to be a linear space.

A basic tool for nonlinear approximation is a Riesz basis 
\(\Psi = \{\psi_{\lambda} : \lambda \in\nabla\}\) for the space 
\(V\), which can, if \(V\) is Sobolev space, be realised by a wavelet 
basis, cf.\ \cite{Dah97,Dau92,DeV98,Sch98} for example.
With a Riesz basis at hand, approximating the function \(u\) in \(V\) 
by \(N\) terms from \(\Psi\) is equivalent to approximating the 
coefficient vector \(\ubfm\) in \(\ell^2(\nabla)\). 
By using a wavelet basis one can show, cf.\ \cite{DeV98} and the
references therein, that the requirements to achieve the rate
\(N^{-s}\) are much weaker compared to linear approximation: 
In contrast to Sobolev regularity of order \(q +sn\), only 
Besov regularity of order \(q +sn\) and integrability \(\tau\isdef 
(\frac{1}{2}+s)^{-1}\) is required. Since then \(\tau \leq 2\),
we can conclude that these spaces contain the respective
Sobolev space \(H^{q+sn}(\square)\), but are in general
much larger.

This classical result holds if the space \(H^{q}(\square)\) is discretised
by isotropic wavelets. The recent articles \cite{BHW23,HvR24,SUV21} 
show, however, that the classical Sobolev spaces \(H^{q}(\square)\) 
can also be characterised by tensor-product or hyperbolic wavelets.
This approach leads to a new perspective on the best \(N\)-term approximation,
as it is well-known that tensor-product wavelets can approximate functions
essentially better. This concept is known as \emph{sparse grid}, cf.\
\cite{BG04,GH13}. Here, the term \emph{essentially} is understood 
as \(n\)-times as well up to logarithmic terms.

However, this is only true under certain additional requirements on the target
function. For example, if \(q = 0\), meaning that \(H^{q}(\square) =
L^{2}(\square)\), the target function needs to admit dominating mixed Sobolev
smoothness. As a consequence, for the best \(N\)-term approximation,
one needs to consider Besov spaces with dominating mixed regularity, which have,
for instance, been studied in \cite{Han10,NS17,SU09,Vyb05}.

In \cite{Nit04,Nit06}, Besov spaces have been used for the approximation with
tensor-product wavelets in \(L^2(\square)\) and \(H^{1}(\square)\).
Therein, the respective approximation spaces have been defined 
as the tensor product of quasi-Banach spaces, resulting in 
function spaces of Besov type of hybrid regularity.
If \(q \geq 0\), this procedure can easily be extended to 
\(H^{q}(\square)\), but whenever \(q\) is strictly positive, 
the approximation spaces are of hybrid regularity.
However, if one wishes to approximate a function in 
\(H^{q}(\square)\) for negative \(q\), which is necessary in the case 
of e.g.\ boundary integral equations, cf.\ \cite{SS11,Ste08}, these 
results do not carry over, as the resulting approximation spaces 
can no longer be written as the intersection of tensor-product spaces.

In contrast, the hybrid regularity Besov spaces on the whole space
\(\Rbbb^n\) have been introduced in terms of wavelet coefficients in 
\cite{BHW23}. Therein, upper and lower bounds on the Kolmogorov 
dictionary width and the best \(N\)-term approximation have 
been derived. However, those results require a 
difference in the isotropic part of the hybrid regularity. As we 
will see, this is also not the case when we consider the best 
\(N\)-term approximation in a Hilbert space \(H^{q}(\square)\) 
with respect to a tensor-product wavelet basis.

In this article, we will characterise the approximation spaces 
\(\Acal^{s}\big(H^{q}(\square)\big)\) consisting of functions \(u \in
H^{q}(\square)\), which can be approximated by \(N\) terms at the 
rate \(N^{-s}\). As we will see, the resulting spaces contain the 
Besov spaces of hybrid regularity from \cite{BHW23}. 
Additionally, when requiring slightly more regularity in terms of
logarithmic decay of the coefficients, with the help of \cite{ACJRV15}
we can immediately conclude that these spaces can be nested 
between classical Besov spaces. However, to 
the authors' best knowledge, it was not known yet whether these
hybrid regularity Besov spaces are embedded in the corresponding 
classical Besov spaces, which turns out to be true in the setting 
of best \(N\)-term approximation. Vice versa, also the opposite 
natural embedding of a classical Besov space of regularity \(q+s\) 
into the hybrid regularity Besov space of isotropic regularity \(q\) 
and additional mixed regularity \(s\) will be proven.

The rest of this article is organised as follows: We introduce the 
multiscale hierarchy and state the requirements on the wavelets
under consideration
in Section \ref{sec:Problem_Forumlation}. In Section 
\ref{sec:Function_Spaces}, we define the function spaces used 
for the approximation with isotropic and tensor-product wavelets. 
This topic, together with a brief review of interpolation, is treated 
in Section \ref{sec:Approximation_Interpolation}. Afterwards, we 
compare the isotropic and hyperbolic approximation spaces in 
Section \ref{sec:comparison_anisotropic_isotropic}, and we state concluding 
remarks in Section \ref{sec:conclusion}.

Throughout this article, to avoid the repeated use of unspecified 
generic constants, we write \(A \lesssim B\) if \(A\) is bounded 
by a uniform constant times \(B\), where the constant does not 
depend on any parametres which \(A\) and \(B\) might depend 
on. Similarly, we write \(A \gtrsim B\) if and only if \(B \lesssim A\).
Finally, if \(A \lesssim B\) and \(B \lesssim A\), we write \(A \sim B\).

\section{Wavelet Bases}
\label{sec:Problem_Forumlation}

In this section, we define the wavelet bases under consideration
and state their most important properties. Throughout the article, 
we assume that the scaling functions and the wavelets involved 
are compactly supported or decay sufficiently fast.
Moreover, we require that this property also holds 
for the dual scaling functions and the dual wavelets.
Numerically feasible wavelets are, in general, biorthogonal 
and were first constructed in \cite{CDF92}. 
These wavelets yield a compact support and therefore,
\(\Diam (\Supp \phi_{\lambda}) 
\sim \Diam (\Supp \psi_{\lambda}) \sim 2^{-|\lambda|}\) holds
for any one-dimensional scaling function \(\phi_{\lambda}\) 
and wavelet \(\psi_{\lambda}\) on level \(|\lambda|\). 
Later, this construction was also transferred 
to a finite interval in \cite{DKU99}. 
Alternatively, as shown in \cite{CDV93,Mey91}, it is also possible 
to use Daubechies wavelets \cite{Dau88,Dau92} on the interval. 
However, as for numerical applications there is a need for an efficient evaluation 
of the primal wavelets, we especially emphasise the constructions 
of \cite{CDF92,DKU99}.

\subsection{Univariate Wavelet Bases}
\label{sec:Univariate_Wavelet_Bases}

We consider a sequence of nested, finite\hyp{}dimensional, and asymptotically 
dense function spaces
\begin{equation*}
	V_{j_0} \subset V_{j_0+1} \subset \ldots \subset V_{j-1} \subset V_j 
	\subset V_{j+1} \ldots \subset V,
\end{equation*}
which are used to discretise a vector space \(V\)
consisting of functions (or distributions) on the unit interval \(I \isdef [0, 1]\).
Typically, \(V = H^{q}([0,1])\) is a Sobolev space with regularity \(q\).
We assume that the function spaces \(V_j\) can be generated by
shifts and dyadic dilations of scaling functions, with 
possible modifications at the endpoints of the interval. 
Moreover, 
for a suitable index set \(\Delta_j\), we assume that
\begin{equation*}
	\Phi_j \isdef \big\{ \phi_{\lambda} : \lambda \in \Delta_j \big\}
\end{equation*}
is a Riesz basis of \(V_j\).
Here, the index \(\lambda = (j,k)\) contains information 
of the level \(j\) and the location \(k\). 
Let us mention that the spaces \((V_j)_{j \geq j_0}\) form a
\emph{multiresolution analysis} on the unit interval $[0,1]$, see e.g.~\cite{Mal89}.
In the easiest case, one can think of \(\phi\) 
as the constant function \(1\), and of \(\phi_{\lambda}\) as a 
properly scaled, dyadic indicator function, i.e., 
\(\phi_{\lambda} = 2^{\nicefrac{j}{2}} \mathds{1}_{[2^{-j}k, 
\,2^{-j}(k+1)]} = 2^{\nicefrac{j}{2}} \mathds{1}_{I}(2^{j}x - k)\).

We say that the spaces \(V_j\) have the approximation order \(d\) 
if they contain locally all polynomials up to the order \(d\). Moreover, 
\(V_j\) are said to have the regularity \(\gamma 
\isdef \sup \{ s \in \Rbbb : V_j \subset H^s([0, 1])\}\).

\subsubsection{Multiscale Bases on \([0,1]\)}

As the spaces \(V_j\) are nested, we may write
\begin{equation}
	V_j = V_{j-1} \oplus W_j
	\label{eq:complement_decomposition}
\end{equation}
with the complement or difference space \(W_j\).
One can show that, if the scaling function \(\phi\) 
generates a shift-invariant space, cf.\ \cite{CDF92,DKU99},
there exist wavelet functions such that
\begin{equation*}
	\Psi_j \isdef \big\{ \psi_{\lambda} : \lambda \in \nabla_j \big\}
\end{equation*}
is a basis set of \(W_j\). Also herein, \(\nabla_j\) is a suitable 
index set and \(\psi_{\lambda}\) is a properly scaled and 
translated copy of a mother wavelet (again with possible
modifications at the endpoints of the interval). For convenience, 
if \(\lambda \in \Delta_j\) or \(\lambda\in\nabla_j\), let us denote 
its level by \(|\lambda| \isdef j\). 

From \eqref{eq:complement_decomposition}, we recursively 
obtain the multiscale decomposition
\begin{equation*}
	V_j = V_{j_0} \oplus W_{j_0+1} \oplus \ldots \oplus W_j,
\end{equation*}
provided that \(j_0 < j\). 
If we also define \(W_{j_0} \isdef V_{j_0}\),
\(\nabla_{j_0} \isdef \Delta_{j_0}\), and \(\psi_{\lambda} \isdef
\phi_{\lambda}\) for \(\lambda \in \nabla_{j_0}\),
as the function spaces \(V_j\) 
are asymptotically dense, we conclude that the set
\begin{equation}
	\Psi \isdef 
	\{\psi_{\lambda} : \lambda \in \nabla\},
	\qquad \nabla \isdef \bigcup_{j = j_0}^{\infty} \nabla_j,
	\label{eq:multiscale_basis}
\end{equation}
spans a dense subset of \(V\).

Following \cite{CDV93,DKU99,Mey91} for example,
the set
\(\Psi\) forms a Riesz basis of \(L^{2}([0,1])\), meaning that
\begin{equation*}
	\left\| \sum_{\lambda \in \nabla} c_{\lambda} \psi_{\lambda}
	\right\|_{L^{2}([0,1])}^2
	\sim \sum_{\lambda \in \nabla} \big|c_{\lambda}\big|^2.
\end{equation*}
Hence, there exists a biorthogonal multiresolution analysis
\begin{equation*}
	\tilde{V}_{j_0} \subset\tilde{V}_{j_0+1} \subset \ldots \subset\tilde{V}_{j-1} \subset\tilde{V}_j 
	\subset\tilde{V}_{j+1} \ldots \subset V'
\end{equation*}
which is also a Riesz basis for \(L^{2}([0,1])\) and asymptotically
dense in \(V'\).

The spaces \(\tilde{V}_j\isdef\{\tilde\phi_{\lambda}: 
\lambda\in\Delta_j\}\) admit the regularity \(\tilde{\gamma} > 0\) 
and the approximation order \(\tilde{d}\). This fact provides us 
the number of vanishing moments of the wavelets \(\psi_{\lambda}\), 
which means that \(\langle p, \, \psi_{\lambda} \rangle = 0\) 
for any polynomial \(p\) up to the order \(\tilde{d}\). 
Finally, there exists also a unique 
biorthogonal wavelet basis \(\tilde{\Psi} = \{\tilde{\psi}_{\lambda} : 
\lambda\in\nabla\}\), satisfying
\begin{equation}
	\big\langle \tilde{\psi}_{\lambda'}, \psi_{\lambda} \big\rangle =
	\delta_{\lambda,\lambda'}.
	\label{eq:biorthogonality}
\end{equation}

\begin{figure}[hbt]
	\begin{minipage}{0.45\textwidth}
		\centering
		\scalebox{1.0}{\noindent
\begin{tikzpicture}

	\tikzstyle{every node}=[font=\small]

	\begin{axis}[
			xmin=0, xmax=1,
			ymin=-4, ymax = 4,
			width=\textwidth,
			height=1.4\textwidth,
			xtick = {0,0.25,0.5,0.75,1},
		]

		\addplot[
			color=cmblue,
		]
		coordinates{
			(0,1.76776695296637)
			(0.125,1.76776695296637)
			(0.125,-3.88908729652601)
			(0.25,-3.88908729652601)
			(0.25,1.41421356237309)
			(0.5,1.41421356237309)
			(0.5,-0.353553390593274)
			(0.75,-0.353553390593274)
			(0.75,0)
		};

		\addplot[
			color=cmred,
		]
		coordinates {
			(0,-0.353553390593274)
			(0.25,-0.353553390593274)
			(0.25,2.82842712474619)
			(0.375,2.82842712474619)
			(0.375,-2.82842712474619)
			(0.5,-2.82842712474619)
			(0.5,0.353553390593274)
			(0.75,0.353553390593274)
			(0.75,0)
		};

		\addplot[
			color=cmgreen,
		]
		coordinates {
			(0.25,0)
			(0.25,-0.353553390593274)
			(0.5,-0.353553390593274)
			(0.5,2.82842712474619)
			(0.625,2.82842712474619)
			(0.625,-2.82842712474619)
			(0.75,-2.82842712474619)
			(0.75,0.353553390593274)
			(1,0.353553390593274)
		};

		\addplot[
			color=cmmagenta,
		]
		coordinates {
			(0.25,0)
			(0.25,-0.353553390593274)
			(0.5,-0.353553390593274)
			(0.5,1.41421356237309)
			(0.75,1.41421356237309)
			(0.75,-3.88908729652601)
			(0.875,-3.88908729652601)
			(0.875,1.76776695296637)
			(1,1.76776695296637)
		};

	\end{axis}
\end{tikzpicture}}
	\end{minipage}
	\hfill
	\begin{minipage}{0.45\textwidth}
		\centering
		\scalebox{1.0}{\input{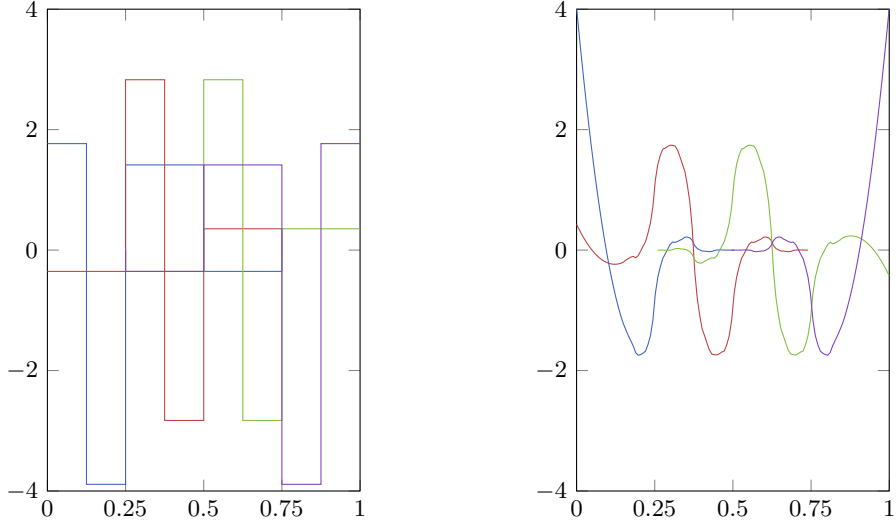}}
	\end{minipage}
	\caption{Primal wavelets (left) and dual wavelets (right) according to the
		construction in \cite{DKU99}. In this setting, we have \(d = 1\), \(\gamma =
	\frac{1}{2}\), and \(\tilde{d} = 3\).}
	\label{fig:wavelet_13}
\end{figure}
\begin{figure}[hbt]
	\begin{minipage}{0.45\textwidth}
		\centering
		\scalebox{1.0}{\noindent
\begin{tikzpicture}

	\tikzstyle{every node}=[font=\small]

	\begin{axis}[
			xmin=0, xmax=1,
			ymin=-4, ymax = 4,
			width=\textwidth,
			height=1.4\textwidth,
			xtick = {0,0.25,0.5,0.75,1},
		]

		\addplot[
			color=cmblue,
		]
		coordinates{
			(0,0)
			(0.125,1.76776695296637)
			(0.25,-2.12132034355964)
			(0.375,-0.707106781186548)
			(0.5,0.707106781186548)
			(0.625,0.353553390593274)
			(0.75,0)
		};

		\addplot[
			color=cmred,
		]
		coordinates {
			(0,0)
			(0.125,-0.353553390593274)
			(0.25,-0.707106781186548)
			(0.375,2.12132034355964)
			(0.5,-0.707106781186548)
			(0.625,-0.353553390593274)
			(0.75,0)
		};

		\addplot[
			color=cmgreen,
		]
		coordinates {
			(0.25, 0)
			(0.375,-0.353553390593274)
			(0.5,-0.707106781186548)
			(0.625,2.12132034355964)
			(0.75,-0.707106781186548)
			(0.875,-0.353553390593274)
			(1,0)
		};

		\addplot[
			color=cmmagenta,
		]
		coordinates {
			(0.5,0)
			(0.625,-0.176776695296637)
			(0.75,-0.353553390593274)
			(0.875,1.59099025766973)
			(1,-2.12132034355964)
		};
	\end{axis}
\end{tikzpicture}}
	\end{minipage}
	\hfill
	\begin{minipage}{0.45\textwidth}
		\centering
		\scalebox{1.0}{\input{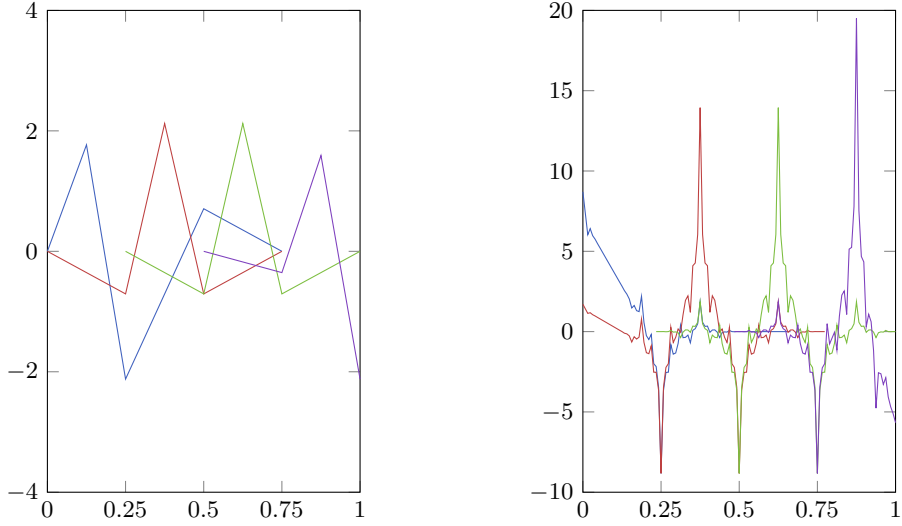}}
	\end{minipage}
	\caption{Primal wavelets (left) and dual wavelets (right) with complementary
		boundary conditions according to the construction in \cite{DS98}. 
		In this setting, we have \(d = \tilde{d} = 2\), and \(\gamma = \frac{3}{2}\). 
		The primal wavelets satisfy zero boundary conditions at \(x=0\) 
	and the dual ansatz functions at \(x=1\).}
	\label{fig:wavelet_22}
\end{figure}

For the setting of \cite{DKU99,DS98}, i.e.,
compactly supported, piecewise constant wavelets with three 
vanishing moments \((d,\tilde{d}) = (1,3)\) and piecewise 
linear wavelets with two vanishing moments \((d,\tilde{d}) = (2,2)\), 
an illustration is provided in Figure \ref{fig:wavelet_13} and Figure
\ref{fig:wavelet_22}, respectively. Notice that in the latter figure, 
the primal wavelets depicted discretise functions with zero 
boundary conditions at \(x = 0\) and nonzero boundary conditions 
at \(x = 1\).

\subsubsection{Multiscale Transforms}

By considering the basis sets \(\Phi_j, \Psi_j\) as row vectors,
we may, due to the relation \eqref{eq:complement_decomposition}, 
write
\begin{align}
	\label{eq:Multiscale_primal}
	\Phi_{j-1} = \Phi_j \Mbfm_{j,0}, \quad \Psi_j = \Phi_j \Mbfm_{j,1},\\
	\label{eq:Mutltscale_dual}
	\tilde{\Phi}_{j-1} = \tilde{\Phi}_j \tilde{\Mbfm}_{j,0}, 
	\quad \tilde{\Psi}_j = \tilde{\Phi}_j \tilde{\Mbfm}_{j,1}.
\end{align}
Herein, the matrices
\(\Mbfm_{j,0}, \tilde{\Mbfm}_{j,0} \in
\Rbbb^{|\Delta_j| \times |\Delta_{j-1}|}\) and
\(\Mbfm_{j,1}, \tilde{\Mbfm}_{j,1} \in
\Rbbb^{|\Delta_j| \times |\nabla_j|}\)
are called the \emph{refinement masks}. By the local supports
and the chosen scaling, the matrices \(\big[\Mbfm_{j,0}, \Mbfm_{j,1}\big]\) 
and \(\big[\tilde{\Mbfm}_{j,0}, \tilde{\Mbfm}_{j,1}\big]\) are
uniformly stable in the sense that their condition numbers are
uniformly bounded.

If \(u \in V_j\), we can write
\begin{equation}
	\label{eq:u_sum_coefficients}
	u = \sum_{\lambda \in \Delta_j} c_{\lambda} \phi_{\lambda}
	= \sum_{\lambda \in \Delta_{j-1}} c_{\lambda} \phi_{\lambda} +
	\sum_{\lambda \in \nabla_j} d_{\lambda} \psi_{\lambda}.
\end{equation}
By interpreting the coefficients \(c_{\lambda}\) and
\(d_{\lambda}\) as finite-dimensional column vectors, 
we may use \eqref{eq:Multiscale_primal} to deduce that
\eqref{eq:u_sum_coefficients} can equivalently be written as
\begin{equation*}
	u = \Phi_j \cbfm_j = \big[\Phi_{j-1}, \Psi_{j}\big]
	\begin{bmatrix}
		\cbfm_{j-1}\\
		\dbfm_{j}\\
	\end{bmatrix}
	= \Phi_j \big[\Mbfm_{j,0}, \Mbfm_{j,1}\big]
	\begin{bmatrix}
		\cbfm_{j-1}\\
		\dbfm_{j}\\
	\end{bmatrix},
\end{equation*}
where \(\cbfm_j \isdef [c_{\lambda}]_{\lambda \in \Delta_j}\)
and \(\dbfm_j \isdef [d_{\lambda}]_{\lambda \in \nabla_j}\).
On the other hand, remarking that 
\(\cbfm_j = \langle \tilde{\Phi}_j, u\rangle\) and
\(\dbfm_j = \langle \tilde{\Psi}_j, u\rangle\), we obtain that
\begin{align*}
	\begin{bmatrix}
		\cbfm_{j-1}\\
		\dbfm_{j}\\
	\end{bmatrix} = 
	\begin{bmatrix}
		\tilde{\Mbfm}_{j,0}^{\intercal}\\
		\tilde{\Mbfm}_{j,1}^{\intercal}\\
	\end{bmatrix} \cbfm_j.
\end{align*}
Hence, we conclude that \([\tilde{\Mbfm}_{j,0},
	\tilde{\Mbfm}_{j,1}]^{\intercal} [\Mbfm_{j,0},
\Mbfm_{j,1}] = \Ibfm\), and therefore
\begin{equation*}
	\begin{aligned}
		\tilde{\Mbfm}_{j,0}^{\intercal} \Mbfm_{j,0} &= \Ibfm, \quad
		&\tilde{\Mbfm}_{j,1}^{\intercal} \Mbfm_{j,1} &= \Ibfm, \\
		\tilde{\Mbfm}_{j,0}^{\intercal} \Mbfm_{j,1} &= \zerobfm, \quad
		&\tilde{\Mbfm}_{j,1}^{\intercal} \Mbfm_{j,0} &= \zerobfm.
	\end{aligned}
\end{equation*}

By applying the multiscale transform recursively, there follows
\begin{align}
	\big[ \Phi_{j_0}, \Psi_{j_0+1}, \dots, \Psi_{j} \big]
	= \Phi_j \Tbfm_j 
	\isdef \Phi_j \prod_{\ell = j_0+1}^{j}
	\begin{bmatrix}
		[\Mbfm_{\ell,0}, \Mbfm_{\ell, 1}] & \\
		& \Ibfm_{|\Delta_j| - |\Delta_{\ell}|}
	\end{bmatrix},
	\label{eq:def_of_T}
\end{align}
and accordingly on the dual side
\begin{equation}
	\begin{bmatrix}
		\cbfm_{j_0} \\
		\dbfm_{j_0+1} \\
		\vdots \\
		\dbfm_{j} \\
	\end{bmatrix}  
	= \tilde{\Tbfm}_j^{\intercal} \cbfm_j
	\isdef \prod_{\ell = j_0+1}^{j}
	\begin{bmatrix}
		\left[
			\begin{smallmatrix}
				\tilde{\Mbfm}_{\ell, 0}^{\intercal}\\ 
				\tilde{\Mbfm}_{\ell, 1}^{\intercal}\\
		\end{smallmatrix}\right] & \\
		& \Ibfm_{|\Delta_j| - |\Delta_{\ell}|}\\
	\end{bmatrix} \cbfm_j.
	\label{eq:def_of_tilde_T}
\end{equation}

\subsection{Multivariate Wavelet Bases}
\label{sec:multivariate_wavelet_bases}

In this subsection, we will briefly review the construction of
multivariate wavelet bases on the unit cube \(\square = [0,1]^n\). 
There are two well-established ways to create wavelet
bases on \(\square\). On the one hand, one can construct
\emph{isotropic} wavelet bases, i.e., bases consisting of functions 
whose supports are shape-regular cuboids. On the other hand, one 
can use the tensor product of wavelets on different levels, resulting 
in so-called \emph{anisotropic} or \emph{hyperbolic} wavelet bases.
In both settings, we also start with a multiresolution analysis where 
on each level \(j\), the \(n\)-fold tensor product of the spaces \(V_j\) 
defined in Section \ref{sec:Univariate_Wavelet_Bases} is involved. 

\subsubsection{Isotropic Wavelet Bases}

Let us first address the bivariate case. 
It is trivial to deduce that 
the basis set \(\Phi_j\otimes \Phi_j\) 
discretises the space \(V_j \otimes V_j\)
for any \(j \geq j_0\). 
As we also have
\begin{equation*}
	V_j \otimes V_j = \big( V_{j-1} \otimes V_{j-1}\big) \oplus 
	\big( V_{j-1} \otimes W_{j}\big) \oplus
	\big( W_{j} \otimes V_{j-1}\big) \oplus 
	\big( W_{j} \otimes W_{j}\big)
\end{equation*}
for \(j \geq j_0+1\), 
we can define
\begin{equation*}
	\Theta_j \isdef \Theta_j^{(0,1)} 
	\cup \Theta_j^{(1,0)} \cup \Theta_j^{(1,1)} \isdef
	\big(\Phi_{j-1} \otimes \Psi_j\big) \cup 
	\big(\Psi_j \otimes \Phi_{j-1}\big) \cup
	\big(\Psi_j \otimes \Psi_j\big)
\end{equation*}
as a basis of the complement space \((V_j \otimes V_j) 
\ominus (V_{j-1} \otimes V_{j-1})\).

This procedure can be generalised to the \(n\)-variate case as well by defining
\begin{align*}
	\Theta_{j}^{\ebfm} \isdef \Theta_{j}^{e_1} \otimes \dots 
	\otimes \Theta_{j}^{e_n}, \qquad
	\Theta_{j}^{e} = 
	\begin{cases}
		\Phi_{j-1}, &e = 0,\\
		\Psi_j, &e = 1,
	\end{cases}
\end{align*}
by which the basis set
\begin{equation*}
	\Theta_j \isdef \bigcup_{\ebfm \in \{0,1\}^n \setminus \{\zerobfm\}}
	\Theta_j^{\ebfm}
\end{equation*}
spans the complement space.
If we define \(\Theta_{j_0} \isdef \Phi_{j_0} \otimes \ldots \otimes
\Phi_{j_0}\), then there holds 
\begin{equation*}
	\Span\left\{\bigcup_{j = j_0}^m  \Theta_j\right\} = 
	V_m \otimes \dots \otimes V_m,
\end{equation*}
and these sets form a Riesz basis of \(L^2(\square)\)
for \(m \to \infty\).

For convenience, let us also define the index set
\(\hexagon_j^{\ebfm}\) as the set of
indices of wavelets \(\theta_{\mu} \in \Theta_j^{\ebfm}\), as well as
\begin{equation*}
	\hexagon_j \isdef \bigcup_{\ebfm \in\{0,1\}^n \setminus
	\{\zerobfm\}} \hexagon_j^{\ebfm}, \qquad
	\hexagon \isdef \bigcup_{j \geq j_0} \hexagon_j,
\end{equation*}
with the canonical adaptation for \(\hexagon_{j_0}\).
Finally, similar to the univariate case, 
we define the level of an index as 
\(|\mu| \isdef j\) for any \(\mu \in \hexagon_j\).

\subsubsection{Tensor-Product Wavelet Bases}

Another approach is to use the tensor product of one-dimensional 
wavelets on all the different levels. This approach is straightforward:
for a given multiindex \(\jbfm\) which satisfies the 
component-wise inequality \(\jbfm \geq \jbfm_0+\onebfm\), we define the index set 
\(\nablabfm_{\jbfm} \isdef \nabla_{j_1} \times \dots \times \nabla_{j_n}\), and for 
\(\lambdabfm = (\lambda_1, \dots, \lambda_n) \in \nablabfm_{\jbfm}\), 
we define the wavelet function
\begin{align*}
	\psi_{\lambdabfm}(\xvec) 
	\isdef \big(\psi_{\lambda_1}\otimes \dots \otimes \psi_{\lambda_n} \big) (\xvec)
	= \psi_{\lambda_1}(x_1) \dots \psi_{\lambda_n}(x_n).
\end{align*}
All such wavelets on a level \(\jbfm\) span the corresponding 
complement space, i.e.,
\[
	\Psi_{\jbfm} \isdef \big\{ \psi_{\lambdabfm} : 
	\lambdabfm \in\nablabfm_{\jbfm} \big\}
\]
is a basis set of \(W_{j_1} \otimes \dots \otimes W_{j_n}\). 
If we also re-define 
\(\psi_{\lambda} \isdef \phi_{\lambda}\) for \(\lambda \in\nabla_{j_0}\) as it
was done in the univariate case,
and extend the above definition to any multiindex \(\jbfm \geq \jbfm_0\), 
then we deduce out of \eqref{eq:multiscale_basis} that the span of
\[
	\Psi \isdef \{ \psi_{\lambdabfm} : \lambdabfm \in\nablabfm \} 
	= \bigcup_{\jbfm \geq \jbfm_0} \Psi_{\jbfm},
	\quad \nablabfm \isdef\bigcup_{\jbfm \geq \jbfm_0} \nablabfm_{\jbfm},
\]
is dense in \(V \otimes \dots \otimes V\) and forms a Riesz basis. 

\subsection{Auxiliary Results}

In this section, we state and prove four lemmata. When comparing the
different function spaces in Section \ref{sec:comparison_spaces}, they
will be crucial.

\begin{lemma}
	Let \(0 < p \leq 1\) and \(\Abfm \in \Rbbb^{m \times n}\). Then, there holds
	\begin{align}
		\|\Abfm\|_p^{p} \leq \big\|\Abfm^{\odot p}\big\|_1
		= \max_{1 \leq j \leq n} \sum_{i=1}^{m}
		\big|a_{i,j}\big|^p,
		\label{eq:matrix_p_norm}
	\end{align}
	where \(\Abfm^{\odot p}\) is the component-wise \(p\)-th power of
	\(\Abfm\).
	\label{lm:matrix_p_norm}
\end{lemma}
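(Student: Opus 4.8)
The plan is to read \(\|\Abfm\|_p\) as the quasinorm on \(\Rbbb^{m \times n}\) induced by the \(\ell^p\) quasinorm \(\|\xvec\|_p = \big(\sum_j |x_j|^p\big)^{1/p}\) on domain and codomain, that is, \(\|\Abfm\|_p = \sup_{\xvec \neq \zerobfm} \|\Abfm \xvec\|_p / \|\xvec\|_p\), and then to establish the inequality by testing against an arbitrary \(\xvec\). The equality on the right-hand side is merely the classical max-column-sum formula for the induced \(1\)-norm, applied to the Hadamard power \(\Abfm^{\odot p}\), whose \((i,j)\) entry is \(|a_{i,j}|^p\); the induced \(1\)-norm of any matrix equals its maximal absolute column sum, so that \(\|\Abfm^{\odot p}\|_1 = \max_{1 \leq j \leq n} \sum_{i=1}^m |a_{i,j}|^p\). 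This identity I would simply state as standard.

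For the inequality itself the sole ingredient is the \(p\)-subadditivity of \(t \mapsto t^p\) for \(0 < p \leq 1\), namely \(|a+b|^p \leq |a|^p + |b|^p\), which extends by induction to \(\big|\sum_k c_k\big|^p \leq \sum_k |c_k|^p\). Writing out \((\Abfm \xvec)_i = \sum_j a_{i,j} x_j\) and applying this row by row gives
\begin{equation*}
	\|\Abfm \xvec\|_p^p = \sum_{i=1}^m \Big|\sum_{j=1}^n a_{i,j} x_j\Big|^p
	\leq \sum_{i=1}^m \sum_{j=1}^n |a_{i,j}|^p\, |x_j|^p .
\end{equation*}
Next I would interchange the order of summation, factor out \(|x_j|^p\), and bound each column sum by its maximum, obtaining
\begin{equation*}
	\|\Abfm \xvec\|_p^p \leq \sum_{j=1}^n |x_j|^p \sum_{i=1}^m |a_{i,j}|^p
	\leq \Big(\max_{1 \leq j \leq n} \sum_{i=1}^m |a_{i,j}|^p\Big) \|\xvec\|_p^p .
\end{equation*}
Dividing by \(\|\xvec\|_p^p\) and passing to the supremum over \(\xvec \neq \zerobfm\), while using that \(t \mapsto t^p\) is increasing so that the supremum commutes with the \(p\)-th power, yields \(\|\Abfm\|_p^p \leq \max_j \sum_i |a_{i,j}|^p\), which is the claim.

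I do not expect any serious obstacle: the estimate is a one-line consequence of the concavity of \(t \mapsto t^p\). The only points that deserve a little care are purely formal, namely fixing the interpretation of \(\|\Abfm\|_p\) as the \emph{induced} \(p\)-quasinorm (so that the right-hand side naturally appears as a column-sum bound rather than some entrywise or Schatten quantity), and checking that for \(0 < p < 1\) one may still form a genuine supremum and use the homogeneity \(\|\Abfm(t\xvec)\|_p = |t|\,\|\Abfm \xvec\|_p\), even though \(\|\cdot\|_p\) is only a quasinorm and the ordinary triangle inequality fails. Both are covered by the elementary subadditivity \(|a+b|^p \leq |a|^p + |b|^p\) that drives the whole argument.
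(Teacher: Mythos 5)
Your argument is correct and is essentially identical to the paper's proof: both test against an arbitrary vector, apply the $p$-subadditivity $|x+y|^p \leq |x|^p + |y|^p$ entrywise, and bound the resulting double sum by the maximal column sum of $\Abfm^{\odot p}$. The extra remarks on homogeneity and the induced-$1$-norm formula are fine but add nothing beyond what the paper takes for granted.
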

\begin{proof}
	Let \(\ubfm \in \Rbbb^{n}\). In view of the 
	subadditivity 
	\begin{equation}\label{eq:subadditive}
		|x+y|^p \leq |x|^p + |y|^p, \quad 0 < p \leq 1,
	\end{equation}
	we conclude
	\begin{align*}
		\|\Abfm \ubfm\|_p^{p} &= \sum_{i=1}^m \big| [\Abfm \ubfm]_i\big|^p
		=\sum_{i=1}^{m} \left| \sum_{j = 1}^{n} a_{i,j} u_j \right|^p\\
		&\leq \sum_{i=1}^{m} \sum_{j=1}^{n} \big|a_{i,j}\big|^p
		\big|u_j\big|^p\\
		&\leq \|\ubfm\|_p^{p} \max_{1 \leq j \leq n} \sum_{i=1}^{m} \big|
		a_{i,j}\big|^p.
	\end{align*}
	This implies \eqref{eq:matrix_p_norm}.
\end{proof}

\begin{remark}
	The subadditivity \eqref{eq:subadditive} implies that 
	the function \(\|\cdot\|_{p}^{p}\) is subadditive for
	\(0 < p \leq 1\), cf.\ \cite{DL93}.
	As the function \(x \mapsto x^{\nicefrac{1}{p}}\) is convex,
	the quasi-triangle inequality 
	\begin{align*}
		\|\ubfm + \vbfm\|_p \leq \big(\|\ubfm\|_{p}^{p} +
		\|\vbfm\|_{p}^{p}\big)^{\frac{1}{p}}
		= 2^{\frac{1}{p}} \bigg(\frac{1}{2} \|\ubfm\|_{p}^{p} +
		\frac{1}{2}\|\vbfm\|_{p}^{p}\bigg)^{\frac{1}{p}}
		\leq 2^{\frac{1}{p}-1} \big(\|\ubfm\|_{p} +
		\|\vbfm\|_{p}\big)
	\end{align*}
	follows for all \(\ubfm, \vbfm \in \ell^{p}\).
\end{remark}

To derive estimates on the wavelet transform matrices, 
we need to bound the matrix coefficients as well.
The decay behaviour of the coefficients is governed by the decay behaviour of
the wavelet itself.
\begin{lemma}
	\label{lm:coefficients_fafield}
	Suppose that the scaling functions and the wavelets
	satisfy the decay estimate
	\begin{equation}
		\label{eq:decay_behaviour_scaling}
		|\phi_{\lambda}(x)|, |\psi_{\lambda}(x)| 
		\lesssim 2^{\nicefrac{j}{2}} (1 + |2^{j}x - k|)^{-\alpha}, \qquad
		\lambda = (j, k), 
	\end{equation}
	and likewise for the dual side for some \(\alpha > 1\).
	Then, for \(j_0 \leq j \leq m\) and any two indices
	\(\lambda = (j,k) \in \nabla_{j}\) and \(\mu = (m, \ell) \in \Delta_{m}\),
	there holds
	\begin{equation}
		\label{eq:decay_behaviour_coefficient}
		|t_{\mu,\lambda}|, \big|\tilde{t}_{\mu,\lambda}\big|
		\lesssim 2^{\frac{j-m}{2}}
		\big(1 + |k - 2^{j-m}\ell|\big)^{-\alpha}.
	\end{equation}
\end{lemma}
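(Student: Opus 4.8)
The plan is to recognise the transform entries as $L^2$-inner products and thereby reduce the claim to a single scalar integral estimate. By the multiscale transform \eqref{eq:def_of_T}, the column of $\Tbfm_m$ indexed by $\lambda \in \nabla_j$ lists the coefficients of $\psi_\lambda$ with respect to the finest-level scaling basis $\Phi_m$, that is, $\psi_\lambda = \sum_{\mu \in \Delta_m} t_{\mu,\lambda}\,\phi_\mu$. Testing this identity against the dual scaling functions and invoking the biorthogonality $\langle \tilde\Phi_m, \Phi_m\rangle = \Ibfm$ yields $t_{\mu,\lambda} = \langle \tilde\phi_\mu, \psi_\lambda\rangle$ for $\mu = (m,\ell) \in \Delta_m$. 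The dual analogue \eqref{eq:def_of_tilde_T} gives in the same way $\tilde t_{\mu,\lambda} = \langle \phi_\mu, \tilde\psi_\lambda\rangle$. Since both entries are inner products of a finer-scale (dual or primal) scaling function against a coarser-scale (primal or dual) wavelet, and since the decay hypothesis \eqref{eq:decay_behaviour_scaling} is assumed on both the primal and the dual side, it suffices to bound $t_{\mu,\lambda}$.

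Next I would insert the decay estimates and rescale. Applying \eqref{eq:decay_behaviour_scaling} to $\tilde\phi_\mu$ and $\psi_\lambda$ gives
\[
	|t_{\mu,\lambda}| \lesssim 2^{(m+j)/2} \int_{\Rbbb} \big(1+|2^m x - \ell|\big)^{-\alpha}\big(1+|2^j x - k|\big)^{-\alpha}\,\dint x .
\]
The substitution $y = 2^m x - \ell$, together with $a \isdef k - 2^{j-m}\ell$ and $\beta \isdef 2^{j-m} \leq 1$, turns this into $|t_{\mu,\lambda}| \lesssim 2^{(j-m)/2}\, I$ with $I \isdef \int_{\Rbbb}(1+|y|)^{-\alpha}(1+|\beta y - a|)^{-\alpha}\,\dint y$. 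Because the prefactor $2^{(j-m)/2}$ already matches the claimed scaling and $a$ is precisely the offset $k - 2^{j-m}\ell$, the entire lemma reduces to showing $I \lesssim (1+|a|)^{-\alpha}$ with a constant independent of $\beta \in (0,1]$ and of $a$.

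For the integral estimate I would argue as follows. By the symmetry $y \mapsto -y$ we may take $a \geq 0$, and the range $a \leq 1$ is immediate since $I \leq \int_{\Rbbb}(1+|y|)^{-\alpha}\,\dint y \lesssim 1$, using $\alpha > 1$. For $a > 1$ I split $\Rbbb$ at the two centres $y = 0$ and $y = a/\beta$ of the two factors. On $|y| \leq a/(2\beta)$ one has $|\beta y - a| \geq a/2$, so the finer factor is $\lesssim (1+a)^{-\alpha}$ and integrating the coarse factor costs only $O(1)$. On the window $y \in [a/(2\beta), 3a/(2\beta)]$ around the second centre I bound $(1+|y|)^{-\alpha} \lesssim (a/\beta)^{-\alpha}$ and integrate the finer factor, which contributes $O(\beta^{-1})$; the product is $\lesssim \beta^{\alpha-1} a^{-\alpha} \leq a^{-\alpha}$. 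The two remaining tails, where $|\beta y - a| \gtrsim a$, again yield $\lesssim (1+a)^{-\alpha}$. Summing the four contributions gives $I \lesssim (1+|a|)^{-\alpha}$.

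The main obstacle is the uniformity in $\beta$. Since $\beta = 2^{j-m}$ can be arbitrarily small, the finer-scale factor $(1+|\beta y - a|)^{-\alpha}$ spreads over an interval of length $\sim \beta^{-1} \to \infty$, so one cannot simply freeze the coarse factor and integrate the finer one over all of $\Rbbb$ without picking up a divergent factor $\beta^{-1}$. The point of localising to the window around $y = a/\beta$ is that there the coarse factor is already as small as $(a/\beta)^{-\alpha}$, and the resulting gain $\beta^{\alpha-1} \leq 1$ — which is exactly where the hypothesis $\alpha > 1$ enters — compensates the $\beta^{-1}$ from the integration. A careless split would instead lose a logarithmic factor, so keeping the decay in $y$ on that window is the delicate step.
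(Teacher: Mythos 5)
Your proposal is correct, and it follows the same overall strategy as the paper --- identify $t_{\mu,\lambda}=\langle\tilde\phi_\mu,\psi_\lambda\rangle$, insert the decay majorants, and split the domain of integration --- but the decomposition itself is genuinely different. The paper first disposes of the near-diagonal case $|k-2^{j-m}\ell|\le 1$ via $\|\psi_\lambda\|_{L^\infty}\|\tilde\phi_\mu\|_{L^1}$, and for $|k-2^{j-m}\ell|\ge 1$ splits the interval at the $\alpha$-dependent threshold $|2^m x-\ell|= 2^{\frac{m-j}{\alpha}-1}|k-2^{j-m}\ell|$, chosen precisely so that on each piece the pointwise decay of \emph{one} factor alone already produces the power $|k-2^{j-m}\ell|^{-\alpha}$, the other factor being integrated away in $L^1$. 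You instead reduce everything to the clean scalar inequality $\int_{\Rbbb}(1+|y|)^{-\alpha}(1+|\beta y-a|)^{-\alpha}\,\dint y\lesssim(1+|a|)^{-\alpha}$ uniformly in $\beta\in(0,1]$, proved by an $\alpha$-independent four-region split (half-way point plus a window around the second centre), and you retain \emph{both} decay factors on the window, where the gain $\beta^{\alpha-1}\le 1$ absorbs the $\beta^{-1}$ from integrating the dilated bump. Both arguments use $\alpha>1$ in an essential way; the paper's tuned threshold keeps each per-region estimate one-factor simple, while your formulation isolates a reusable convolution-type lemma and makes explicit where a careless split would cost a logarithm. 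Your reduction of $\tilde t_{\mu,\lambda}$ to the same estimate by exchanging primal and dual matches the paper.
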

\begin{proof}
	First, we note that
	\begin{align*}
		t_{\mu,\lambda} &= \int_{I} \psi_{\lambda}(x)
		\tilde{\phi}_{\mu}(x) \dint x.
	\end{align*}
	Since \(\|\psi_{\lambda}\|_{L^{\infty}(I)} \lesssim 2^{\nicefrac{j}{2}}\)
	and \(\big\|\tilde{\phi}_{\mu}\big\|_{L^{1}(I)} \lesssim 2^{-\nicefrac{m}{2}}\)
	by \eqref{eq:decay_behaviour_scaling},
	this implies \eqref{eq:decay_behaviour_coefficient} for all pairs of
	\(k\) and \(\ell\) such that \(|k - 2^{j-m}\ell| \leq 1\).

	If \(|k - 2^{j-m}\ell| \geq 1\),
	we distinguish between the following two cases.
	First, if \(|2^{m}x - \ell| \geq 2^{\frac{m-j}{\alpha}
		- 1}
	|k - 2^{j-m}\ell|\),
	there follows
	\begin{align*}
		|t_{\mu,\lambda}| &\lesssim 2^{\frac{j+m}{2}} 
		\int_{I} \big|\psi_{\lambda}(x)\big|
		\big(1 + |2^{m}x - \ell|\big)^{-\alpha} \dint x \\
		&\lesssim 2^{\frac{m-j}{2}}
		\big(1 + 2^{\frac{m-j}{\alpha} - 1}|k -
		2^{j-m}\ell|\big)^{-\alpha} \\
		&\lesssim 2^{\frac{j-m}{2}} |k - 2^{j-m}\ell|^{-\alpha}.
	\end{align*}

	On the other hand, if \(|2^{m}x - \ell| \leq
	2^{\frac{m-j}{\alpha}-1} |k - 2^{j-m}\ell|\),
	then we have
	\begin{align*}
		|2^{j}x -k| &= \big|2^{j-m}(2^{m}x -\ell) + 2^{j-m}\ell
		- k\big|
		\geq |2^{j-m}\ell - k| - 2^{j-m} |2^{m}x - \ell| \\
		&\geq |2^{j-m}\ell - k| - 2^{j-m} 2^{\frac{m-j}{\alpha} - 1}
		|k - 2^{j-m}\ell| \\
		&= |2^{j-m}\ell - k| \bigg(1 -
		\frac{1}{2} \cdot 2^{(m-j)(\frac{1}{\alpha} - 1)}\bigg) \\
		&\geq \frac{1}{2} \cdot |2^{j-m}\ell - k|
	\end{align*}
	because \(\alpha > 1\).
	Therefore, we can conclude that
	\begin{align*}
		|t_{\mu,\lambda}| &\lesssim 2^{\frac{j+m}{2}}
		\int_{I} \big(1 + |2^{j}x - k|\big)^{-\alpha}
		\big|\tilde{\phi}_{\mu}(x)\big| \dint x \\
		&\lesssim 2^{\frac{j-m}{2}} 
		\big(1 + |2^{j-m}\ell - k|\big)^{-\alpha} \\
		&\lesssim 2^{\frac{j-m}{2}} |k - 2^{j-m}\ell|^{-\alpha}
	\end{align*}
	also in this case.
	The estimates on \(\tilde{t}_{\mu,\lambda}\) follow in the same manner.
\end{proof}

\begin{remark}
	Lemma \ref{lm:coefficients_fafield} relies on the decay condition
	\eqref{eq:decay_behaviour_scaling} only. In particular,
	\eqref{eq:decay_behaviour_scaling} holds whenever
	\begin{equation*}
		\phi_{\lambda}(x) = 2^{\nicefrac{j}{2}} \phi(2^{j}x - k),
		\qquad |\phi(x)| \lesssim (1 + |x|)^{-\alpha},
	\end{equation*}
	which is true as soon as \(\phi\) gives rise to a regular multiresolution
	analysis in the sense of \cite{Mal89}.
	Obviously, \eqref{eq:decay_behaviour_scaling} holds also for 
	wavelets with compact support on both the primal and dual side.
	Such wavelets have been constructed on the real line and adapted 
	to the interval, cf.\ \cite{CDV93,Dau88,Dau92,Mey91} for orthonormal 
	wavelets and \cite{CDF92,DKU99} for the biorthogonal spline wavelets.
	Additionally, there are smooth wavelets on the real line which decay faster
	than any polynomial, see \cite{Mey90}.
\end{remark}

\begin{lemma}
	\label{lm:tranform_linfty}
	Suppose that \eqref{eq:decay_behaviour_scaling} holds for some \(\alpha >
	1\) on the primal and dual side.
	Let \(m \geq j_0\) and consider the
	wavelet transforms \(\Tbfm_m\) 
	and \(\tilde\Tbfm_m\) given by 
	\eqref{eq:def_of_T} and \eqref{eq:def_of_tilde_T},
	respectively. Then, for \(\nicefrac{1}{\alpha} < p \leq 2\), there holds
	\begin{equation*}
		\|\Tbfm_m^{\intercal}\|_p, 
		\|\tilde{\Tbfm}_m^{\intercal}\|_p \lesssim 1,\qquad 
		\|\Tbfm_m\|_p, 
		\|\tilde{\Tbfm}_m\|_p \lesssim 2^{m(\frac{1}{p}-\frac{1}{2})}.
	\end{equation*}
\end{lemma}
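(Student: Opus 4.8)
The plan is to reduce all four estimates to bounds on the maximal column sums of $p$-th powers of the entries of the transform matrices, which is exactly what Lemma~\ref{lm:matrix_p_norm} delivers, and then to pass from the quasi-Banach range to $1<p\le 2$ by interpolation. Recall from \eqref{eq:def_of_T} that $\Tbfm_m$ has rows indexed by $\mu\in\Delta_m$ and columns indexed by the multiscale indices $\lambda=(j,k)$, $j_0\le j\le m$, with entries $t_{\mu,\lambda}=\langle\tilde\phi_\mu,\psi_\lambda\rangle$; by Lemma~\ref{lm:coefficients_fafield} they obey the decay estimate \eqref{eq:decay_behaviour_coefficient}. I would first treat the range $\nicefrac{1}{\alpha}<p\le 1$ directly: by Lemma~\ref{lm:matrix_p_norm},
\[
	\|\Tbfm_m\|_p^p\le\max_{\lambda}\sum_{\mu\in\Delta_m}|t_{\mu,\lambda}|^p,
	\qquad
	\|\Tbfm_m^{\intercal}\|_p^p\le\max_{\mu\in\Delta_m}\sum_{\lambda}|t_{\mu,\lambda}|^p,
\]
so that everything rests on estimating these two sums.

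For $\|\Tbfm_m^{\intercal}\|_p$ I fix a fine index $\mu=(m,\ell)$ and sum over all multiscale $\lambda=(j,k)$. At a fixed level $j$, the sum $\sum_k(1+|k-2^{j-m}\ell|)^{-\alpha p}$ is dominated by $\sum_{n\in\Zbbb}(1+|n|)^{-\alpha p}$, which converges because $\alpha p>1$ --- this is exactly where the hypothesis $p>\nicefrac{1}{\alpha}$ enters --- leaving the prefactor $2^{(j-m)p/2}$, and summing the resulting geometric series over $j_0\le j\le m$ yields $\|\Tbfm_m^{\intercal}\|_p\lesssim 1$. For $\|\Tbfm_m\|_p$ I instead fix a multiscale index $\lambda=(j,k)$ and sum over the fine indices $\mu=(m,\ell)$. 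Now the points $2^{j-m}\ell$ form a grid of spacing $2^{j-m}\le 1$, so $\sum_\ell(1+|k-2^{j-m}\ell|)^{-\alpha p}\lesssim 2^{m-j}\int_{\Rbbb}(1+|k-t|)^{-\alpha p}\dint t\lesssim 2^{m-j}$; together with the prefactor this gives $\sum_\mu|t_{\mu,\lambda}|^p\lesssim 2^{(m-j)(1-p/2)}$, whose maximum over $j_0\le j\le m$ is attained at $j=j_0$ (since $1-\nicefrac{p}{2}\ge 0$) and is $\lesssim 2^{m(1-p/2)}$. Taking $p$-th roots yields $\|\Tbfm_m\|_p\lesssim 2^{m(\nicefrac{1}{p}-\nicefrac{1}{2})}$.

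To reach $1<p\le 2$ I would interpolate. At the endpoint $p=2$ the bounds $\|\Tbfm_m\|_2,\|\Tbfm_m^{\intercal}\|_2\lesssim 1$ follow from the uniform Riesz basis property, i.e.\ the norm equivalence $\|\cbfm_m\|_2\sim\|u\|_{L^2}\sim\|\dbfm\|_2$ between single-scale and multiscale coefficients, and they are consistent with the claimed exponent since $2^{m(\nicefrac{1}{2}-\nicefrac{1}{2})}=1$. Riesz--Thorin interpolation of $\Tbfm_m^{\intercal}$ between the bounds $\lesssim 1$ at $p=1$ and $\lesssim 1$ at $p=2$ gives $\|\Tbfm_m^{\intercal}\|_p\lesssim 1$ throughout; interpolating $\Tbfm_m$ between $\lesssim 2^{m/2}$ at $p=1$ and $\lesssim 1$ at $p=2$ gives, with $\theta=2-\nicefrac{2}{p}$, the bound $2^{(m/2)(1-\theta)}=2^{m(\nicefrac{1}{p}-\nicefrac{1}{2})}$, as desired.

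Finally, the estimates for $\tilde{\Tbfm}_m$ and $\tilde{\Tbfm}_m^{\intercal}$ are obtained verbatim, since Lemma~\ref{lm:coefficients_fafield} furnishes the same decay \eqref{eq:decay_behaviour_coefficient} on the dual side. The main technical point is the asymmetry between the two column sums: summing over the coarse location $k$ is uniformly bounded and contributes only a convergent geometric factor in the level, whereas summing over the fine location $\ell$ produces the extra factor $2^{m-j}$ that is responsible for the scale factor $2^{m(\nicefrac{1}{p}-\nicefrac{1}{2})}$. Getting these two sums, together with the subsequent maximisation over $j$, on the correct side is the crux of the argument, and the convergence of both is guaranteed precisely by the assumption $p>\nicefrac{1}{\alpha}$.
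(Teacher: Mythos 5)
Your proposal is correct and follows essentially the same route as the paper's proof: column-sum estimates via Lemma \ref{lm:matrix_p_norm} together with the decay bound of Lemma \ref{lm:coefficients_fafield} for $p\le 1$, the $L^2$ Riesz-basis bound at $p=2$, and interpolation for $1<p\le 2$, including the key asymmetry between summing over the coarse location $k$ (bounded) and the fine location $\ell$ (a factor $2^{m-j}$). The only cosmetic difference is that you treat the whole range $\nicefrac{1}{\alpha}<p\le 1$ at once with an integral comparison, whereas the paper does $p=1$ via the explicit $\ell^1$ operator-norm formula and handles $p<1$ separately.
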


\begin{proof}
	We will show this lemma on the primal side only, as 
	the same arguments can be used for the dual side.
	First, we remark that, for \(p = 2\), 
	both statements are true since both \(\Phi\) and
	\(\Psi\) are uniformly stable bases of \(L^2([0,1])\).

	Let us consider the first inequality.
	We will start by showing this for \(p = 1\), from which the statement follows for
	all \(1 \leq p \leq 2\) by interpolation.
	In view of \eqref{eq:def_of_T}, for each \(\lambda \in \bigcup_{j = j_0}^m
	\nabla_{j}\), there holds
	\begin{equation}
		\psi_{\lambda} = \sum_{\mu \in \Delta_m} t_{\mu, \lambda}
		\phi_{\mu}.
		\label{eq:linear_combination_trafo}
	\end{equation}
	Therefore, to estimate
	\begin{equation*}
		\|\Tbfm_m^{\intercal}\|_{1} = 
		\|\Tbfm_m\|_{\infty} = \max_{\mu \in \Delta_m} \sum_{j = j_0}^m
		\sum_{\lambda \in \nabla_j}	|t_{\mu,\lambda}|,
	\end{equation*}
	we fix \(\mu = (m,\ell) \in \Delta_m\).
	In view of Lemma \ref{lm:coefficients_fafield},
	there are at most \(\Ocal(1)\) wavelets \(\psi_{\lambda}\), 
	\(\lambda = (j,k) \in \nabla_j\) with \(|k - 2^{j-m}\ell| \leq 1\),
	which implies that
	\begin{align*}
		\sum_{j=j_0}^{m} \sum_{\lambda \in \nabla_{j}}
		|t_{\mu,\lambda}|
		&\lesssim \sum_{j=j_0}^{m} 2^{\frac{j-m}{2}} 
		\left[1 + 
			\sum_{\substack{k \in \Zbbb \\ |k - 2^{j-m}\ell| \geq 1}} 
		|k - 2^{j-m}\ell|^{-\alpha}\right]
		\\[-1em]
		&\lesssim \sum_{j=j_0}^{m} 2^{\frac{j-m}{2}} \\
		&\lesssim 1
	\end{align*}
	as \(\alpha > 1\).
	Since \(\mu \in \Delta_{m}\) was arbitrary,
	this implies the first estimate for all \(1 \leq p \leq 2\) by
	interpolation.
	If \(p < 1\), we use the same arguments to conclude that
	\begin{equation*}
		\sum_{j=j_0}^{m} \sum_{\lambda \in \nabla_{j}}
		|t_{\mu,\lambda}|^{p} \lesssim \sum_{j=j_0}^{m} 2^{p
		\frac{j-m}{2}} \lesssim 1
	\end{equation*}
	as \(\alpha p > 1\),
	so Lemma \ref{lm:matrix_p_norm}
	yields the first estimate for \(\nicefrac{1}{\alpha} < p \leq 1\).

	To show the second inequality, we will first show that \(\|\Tbfm_m\|_1
	\lesssim 2^{\nicefrac{m}{2}}\). If this holds true, then we can conclude 
	for \(\frac{1}{p} = \frac{1-\theta}{1} + \frac{\theta}{2} = 1 - \frac{\theta}{2}\),
	i.e., \(1 - \theta = \frac{2}{p} -1\), by interpolation that
	\begin{equation*}
		\|\Tbfm_m\|_p \leq \|\Tbfm_m\|_1^{1-\theta} \|\Tbfm_m\|_2^{\theta}
		\lesssim 2^{\frac{m}{2}(\frac{2}{p} - 1)} \cdot 1^{\theta} =
		2^{m(\frac{1}{p} - \frac{1}{2})},
	\end{equation*}
	provided that \(1 \leq p \leq 2\).

	Let \(j_0 \leq j \leq m\) and \(\lambda  = (j,k) \in \nabla_{j}\).
	Again in view of Lemma \ref{lm:coefficients_fafield},
	there are at most \(\Ocal(2^{m-j})\) indices \(\mu = (m, \ell) \in \Delta_{m}\)
	satisfying \(|k - 2^{j-m}\ell| \leq 1\).
	Hence, there holds
	\begin{align*}
		\sum_{\mu \in \Delta_m} |t_{\mu,\lambda}|
		&\lesssim 2^{\frac{j-m}{2}} 
		\left[ 2^{m-j} + 
			\sum_{\substack{\ell \in \Zbbb \\
			|k - 2^{j-m}\ell| \geq 1}} 
		|k - 2^{j-m}\ell|^{-\alpha} \right]\\
		&= 2^{\frac{m-j}{2}} + 2^{\frac{j-m}{2}} \sum_{\substack{\ell \in \Zbbb \\
		|\ell| \geq 2^{m-j}}} 2^{\alpha(m-j)} |\ell|^{-\alpha}
		\\
		&\lesssim 2^{\frac{m-j}{2}} +
		2^{(m-j)(\alpha - \frac{1}{2})} 2^{(m-j)(1-\alpha)} \\
		&\sim 2^{\frac{m-j}{2}},
	\end{align*}
	by which \(\|\Tbfm_m\|_1 \lesssim 2^{\frac{m}{2}}\).

	Finally, for \(0 < p < 1\), we 
	use similar arguments to obtain that
	\begin{equation*}
		\sum_{\mu \in \Delta_{m}} |t_{\mu,\lambda}|^{p}
		\lesssim 2^{(m-j)(1 - \frac{p}{2})},
		\qquad \alpha p > 1,
	\end{equation*}
	which allows us to conclude the second estimate using
	Lemma \ref{lm:matrix_p_norm}.
\end{proof}

In Section \ref{sec:comparison_spaces}, 
we will need to estimate of Kronecker products of matrices.
Such estimates are derived in \cite{Hac19,LF72,Nit04}, for example. 
However, in the special case we are interested in,
the proof is rather elementary,
so we provide it for the reader's convenience.

\begin{lemma}\label{lm:kron_prod_norms}
	Let \(\Abfm \in \Rbbb^{m \times n}\) and \(\Bbfm \in \Rbbb^{k \times
	\ell}\). For \(p \in \{1, 2, \infty\}\), there holds
	\begin{equation*}
		\|\Abfm \otimes \Bbfm\|_{p} = \|\Abfm\|_{p} \|\Bbfm\|_{p}.
	\end{equation*}
\end{lemma}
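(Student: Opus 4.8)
The plan is to treat the three values $p \in \{1, 2, \infty\}$ separately, since the induced operator norm $\|\cdot\|_p$ has a different explicit description in each case. Throughout I use the entrywise formula for the Kronecker product, $(\Abfm \otimes \Bbfm)_{(i,i'),(j,j')} = a_{i,j}\, b_{i',j'}$, where the rows and columns of $\Abfm \otimes \Bbfm$ are indexed by the pairs $(i,i')$ and $(j,j')$, with $i,j$ ranging over the row and column indices of $\Abfm$ and $i',j'$ over those of $\Bbfm$.

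For $p = 1$, I would use that $\|\Abfm\|_1$ is the maximal absolute column sum. The absolute column sum of the column $(j,j')$ of $\Abfm \otimes \Bbfm$ factorises as
\[
	\sum_{i,i'} \big|a_{i,j}\, b_{i',j'}\big|
	= \bigg(\sum_i |a_{i,j}|\bigg)\bigg(\sum_{i'} |b_{i',j'}|\bigg),
\]
and maximising over the pair $(j,j')$ decouples into the product of the maximum over $j$ and the maximum over $j'$, which is precisely $\|\Abfm\|_1 \|\Bbfm\|_1$. The case $p = \infty$ then follows immediately by transposition: since $\|\Mbfm\|_\infty = \|\Mbfm^{\intercal}\|_1$ and $(\Abfm \otimes \Bbfm)^{\intercal} = \Abfm^{\intercal} \otimes \Bbfm^{\intercal}$, applying the $p=1$ identity to the transposed matrices yields $\|\Abfm \otimes \Bbfm\|_\infty = \|\Abfm\|_\infty \|\Bbfm\|_\infty$.

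For $p = 2$, the operator norm equals the largest singular value, so I would reduce to the spectrum of $(\Abfm \otimes \Bbfm)^{\intercal}(\Abfm \otimes \Bbfm)$. Combining the transpose rule with the mixed-product rule for Kronecker products gives $(\Abfm \otimes \Bbfm)^{\intercal}(\Abfm \otimes \Bbfm) = (\Abfm^{\intercal}\Abfm) \otimes (\Bbfm^{\intercal}\Bbfm)$. The eigenvalues of a Kronecker product of symmetric positive semidefinite matrices are exactly the pairwise products of the individual eigenvalues, whence the largest eigenvalue is $\lambda_{\max}(\Abfm^{\intercal}\Abfm)\,\lambda_{\max}(\Bbfm^{\intercal}\Bbfm)$; taking square roots delivers $\|\Abfm \otimes \Bbfm\|_2 = \|\Abfm\|_2 \|\Bbfm\|_2$.

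The only non-elementary ingredient is the eigenvalue statement used for $p = 2$, and even this is routine: if $\vbfm$ and $\ubfm$ are eigenvectors of $\Abfm^{\intercal}\Abfm$ and $\Bbfm^{\intercal}\Bbfm$, then $\vbfm \otimes \ubfm$ is an eigenvector of the Kronecker product with eigenvalue the product of the two, and running over orthonormal eigenbases produces a full orthonormal eigenbasis of the tensor-product space. Consequently I expect no genuine obstacle; the one point demanding care is the bookkeeping of the double indices $(i,i')$ and $(j,j')$ in the $p \in \{1,\infty\}$ computations.
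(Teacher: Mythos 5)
Your proposal is correct and follows essentially the same route as the paper: explicit factorisation of the absolute row/column sums for \(p \in \{1,\infty\}\) and a spectral argument for \(p = 2\). If anything, your \(p=2\) case is slightly more careful than the paper's, which speaks loosely of ``eigenvalues of \(\Abfm \otimes \Bbfm\)'' even though the matrices are rectangular, whereas you correctly pass to the singular values via \((\Abfm \otimes \Bbfm)^{\intercal}(\Abfm \otimes \Bbfm) = (\Abfm^{\intercal}\Abfm) \otimes (\Bbfm^{\intercal}\Bbfm)\).
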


\begin{proof}
	For the \(\|\cdot\|_2\)-norm, the claim follows from the fact that the
	eigenvalues of \(\Abfm \otimes \Bbfm\) are given by the products of any two
	eigenvalues of \(\Abfm\) and \(\Bbfm\).

	For the \(\|\cdot\|_{\infty}\)-norm, there holds
	\begin{align*}
		\|\Abfm \otimes \Bbfm\|_{\infty} 
		&= \max_{1 \leq r_1 \leq n} \max_{1 \leq r_2
		\leq \ell} \sum_{t_1 = 1}^{m} \sum_{t_2 =1}^{k} \big|a_{r_1, t_1}
		b_{r_2, t_2} \big|\\
		&= \left[\max_{1 \leq r_1 \leq n} \sum_{t_1 = 1}^{m} \big|a_{r_1, t_1}\big|
		\right] 
		\left[ \max_{1 \leq r_2
		\leq \ell} \sum_{t_2 =1}^{k} \big|b_{r_2, t_2} \big|\right]\\
		&= \|\Abfm\|_{\infty} \|\Bbfm\|_{\infty}.
	\end{align*}

	For the \(\|\cdot\|_{1}\)-norm, the claim follows with exactly the same
	arguments.
\end{proof}

\section{Function Spaces}
\label{sec:Function_Spaces}

The goal of this section is to characterise the function spaces used in the
setting of best \(N\)-term approximation. We will see that these spaces can be
characterised by isotropic or tensor-product wavelets, or in the case of Sobolev
spaces, by both of them. For the sake of convenience, 
for \(\lambdabfm \in \nablabfm_{\jbfm}\),
we shall denote in the following
\begin{equation*}
	|\lambdabfm|_1 \isdef |\jbfm|_1, \qquad
	|\lambdabfm|_\infty \isdef |\jbfm|_\infty.
\end{equation*}

\subsection{Sobolev Spaces}

In Section \ref{sec:Problem_Forumlation}, we have assumed 
that all \(\theta_{\mu}\) and \(\psi_{\lambdabfm}\) are scaled such 
that they are normalised in \(L^2(\square)\). 
However, if we define
\begin{equation*}
	\begin{aligned}
		\psi_{\lambdabfm}^{(p)} 
		&\isdef 2^{|\lambdabfm|_1 \big(\frac{1}{p} - \frac{1}{2} \big)} \psi_{\lambdabfm},
		&\qquad \tilde{\psi}_{\lambdabfm}^{(p)}
		&\isdef 2^{|\lambdabfm|_1 \big( \frac{1}{2} - \frac{1}{p} \big)}
		\tilde{\psi}_{\lambdabfm},\\
		\theta_{\mu}^{(p)}
		&\isdef 2^{n|\mu| \big(\frac{1}{p} - \frac{1}{2} \big)} \theta_{\mu},
		&\qquad \tilde{\theta}_{\mu}^{(p)}
		&\isdef 2^{n|\mu| \big( \frac{1}{2} - \frac{1}{p} \big)}
		\tilde{\theta}_{\mu},\\
	\end{aligned}
\end{equation*}
we get a primal basis which is normalised in \(L^p(\square)\),
and a dual basis which is normalised in \(L^{p'}(\square)\), where
\(\frac{1}{p} + \frac{1}{p'} = 1\).
With the above notation at hand, every function \(u \in L^{2}(\square)\) 
admits unique expansions
\begin{align*}
	\begin{aligned}
		u &= \sum_{\lambdabfm \in\nablabfm}
		u_{\lambdabfm}^{(p)} \psi_{\lambdabfm}^{(p)},
		&\qquad u_{\lambdabfm}^{(p)} &= \big\langle \tilde{\psi}_{\lambdabfm}^{(p)},
		\, u \big\rangle,\\
		u &= \sum_{\mu \in \hexagon} u_{\mu}^{(p)} \theta_{\mu}^{(p)}, 
		&\qquad u_{\mu}^{(p)} &= \big\langle \tilde{\theta}_{\mu}^{(p)},
		u\big\rangle.
	\end{aligned}
\end{align*}
To keep the notation 
simple, we always assume that a coefficient \(u_{\lambdabfm}\) or
\(u_{\mu}\)
without suffix \(p\) corresponds to an \(L^2(\square)\)-normalised 
expansion of \(u\). 

We intend next to characterise certain function spaces. It is 
well established, see e.g.\ \cite{Dah97,Sch98}, 
that univariate wavelet coefficients of a function 
characterise the norm of this function with respect to a range of 
function spaces. However, in a multivariate setting, we can 
also use a tensor-product wavelet basis to characterise
isotropic function spaces, i.e., there holds
\begin{align}
	\label{eq:norm_equivalence}
	\|u\|_{H^s(\square)}^2 \sim \sum_{\lambdabfm \in\nablabfm} 2^{2s |\lambdabfm|_\infty}
	\big|\langle\tilde{\psi}_{\lambdabfm},u\rangle\big|^2
	\sim \sum_{\mu \in \hexagon} 2^{2s|\mu|}\big|\langle
	\tilde{\theta}_{\mu}, u\rangle\big|^2,
	\quad - \tilde{\gamma} < s < \gamma, \\
	\label{eq:norm_equivalence_dual}
	\|u\|_{H^t(\square)}^2 \sim \sum_{\lambdabfm \in\nablabfm} 2^{2t |\lambdabfm|_\infty}
	\big|\langle u,\psi_{\lambdabfm}\rangle\big|^2
	\sim \sum_{\mu \in \hexagon} 2^{2s|\mu|} \big|\big\langle u,
	\theta_{\mu}\rangle\big|^2,
	\quad -\gamma<t<\tilde{\gamma},
\end{align}
cf.\ \cite{ACJRV15,Dah97,GOS99,HvR24,Sch98,SUV21}. 
In particular, a function \(u\) is contained in an isotropic Sovolev space
if the coefficients of its (tensor-product) wavelet 
expansion decay sufficiently fast.
Note that we have slightly abused the notation here
as we have omitted the boundary conditions which are imposed by
the primal and dual wavelet bases in \eqref{eq:norm_equivalence} and
\eqref{eq:norm_equivalence_dual}, respectively.
Note that, in accordance with \cite{DS98}, all boundary conditions which do not change
within a face of the unit cube can be expressed.

On the other hand, in \cite{GOS99}, there was also shown that
the coeffcients with respect to a tensor-product basis characterise
the dominating mixed regularity of a function \(u\). Shortly after, 
in \cite{GK00}, Sobolev spaces of \emph{hybrid regularity} were 
introduced. Roughly speaking, these spaces \(\Hfrak^{q,s}\), which are 
sometimes also called \emph{Griebel-Knapek spaces}, contain all 
functions \(u\) which, with respect to \(H^q(\square)\), admit mixed derivatives 
up to the order \(s\). In terms of tensor products, we define
these spaces in the following way.

\begin{definition}
	\label{def:Griebel_Knapek_space}
	We define \(\Hfrak^{q,s}(\square)\) for \(q \geq 0\) and \(s \in \Rbbb\)
	as the space 
	\begin{equation}
		\Hfrak^{q,s}(\square) \isdef \bigcap_{i = 1}^{n}
		\bigotimes_{j = 1}^{n} H^{s + \delta_{i,j}q}([0,1]).
		\label{eq:Griebel_Knapek_space}
	\end{equation}
	For \(q < 0\), we define the space \(\Hfrak^{q,s}(\square)\) via the duality 
	\(\Hfrak^{q,s}(\square) = \big(\Hfrak^{-q,-s}(\square)\big)'\).
\end{definition}
\begin{remark}
	In the two-dimensional setup, \eqref{eq:Griebel_Knapek_space}
	simply means that
	\begin{equation*}
		\Hfrak^{q,s}(\square) = \left(H^{q+s}([0,1]) \otimes H^{s}([0,1])\right)
		\cap \left(H^{s}([0,1]) \otimes H^{q+s}([0,1])\right).
	\end{equation*}
\end{remark}

By using the norm equivalences \eqref{eq:norm_equivalence} and
\eqref{eq:norm_equivalence_dual}, we see that we can also characterise the
spaces \(\Hfrak^{q,s}(\square)\) in terms of hyperbolic wavelet coefficients.
Note that the following norm equivalences have already been
established in \cite{GK00,GK09} for a smaller range of parameters.
Those results cover the cases \(s \geq 0\) and \(0 \leq q + s < \gamma\),
which is not sufficient to prove the Jackson and Bernstein inequalities 
in Section \ref{sec:Approximation_tensor}.

\begin{theorem}
	\label{thm:Griebel_Knapek_norm_equivalence}
	There holds
	\begin{align}
		\label{eq:norm_equivalence_GK}
		\|u\|_{\Hfrak^{q,s}(\square)}^2 &\sim \sum_{\lambdabfm \in\nablabfm}
		2^{2q|\lambdabfm|_\infty + 2s|\lambdabfm|_1}
		\big|\langle\tilde{\psi}_{\lambdabfm},u\rangle\big|^2,
		\quad -\tilde{\gamma} < s, q+s < \gamma,\\
		\label{eq:norm_equivalence_GK_dual}
		\|u\|_{\Hfrak^{q,s}(\square)}^2 &\sim \sum_{\lambdabfm \in\nablabfm}
		2^{2q|\lambdabfm|_\infty + 2s|\lambdabfm|_1}
		\big|\langle u,\psi_{\lambdabfm}\rangle\big|^2,
		\quad -\gamma < s, q+s < \tilde{\gamma}.
	\end{align}
\end{theorem}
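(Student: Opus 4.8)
The plan is to reduce everything to the univariate norm equivalences \eqref{eq:norm_equivalence} and \eqref{eq:norm_equivalence_dual} by tensorisation, and then to treat the two signs of \(q\) separately. The starting point is that the Hilbert tensor product of Riesz bases is again a Riesz basis with multiplied weights, so the univariate equivalence \(\|v\|_{H^t([0,1])}^2 \sim \sum_{\lambda} 2^{2t|\lambda|}|\langle\tilde{\psi}_{\lambda},v\rangle|^2\), valid for \(-\tilde{\gamma} < t < \gamma\), tensorises to
\[
	\|u\|_{\bigotimes_{j=1}^n H^{t_j}([0,1])}^2
	\sim \sum_{\lambdabfm\in\nablabfm} 2^{2\sum_{j=1}^n t_j|\lambda_j|}
	\big|\langle\tilde{\psi}_{\lambdabfm},u\rangle\big|^2,
\]
provided every exponent satisfies \(-\tilde{\gamma} < t_j < \gamma\). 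For the \(i\)-th factor appearing in \eqref{eq:Griebel_Knapek_space}, where \(t_j = s + \delta_{i,j}q\), the weight exponent becomes \(\sum_j t_j|\lambda_j| = s|\lambdabfm|_1 + q|\lambda_i|\).

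For \(q \geq 0\) I would exploit that \(\Hfrak^{q,s}(\square)\) is the intersection of these \(n\) tensor-product spaces, so that, up to the factor \(n\), its squared norm is equivalent to the sum of the individual squared norms. Summing the displayed equivalences over \(i\) produces the weight \(2^{2s|\lambdabfm|_1}\sum_{i=1}^n 2^{2q|\lambda_i|}\); since \(q \geq 0\) and there are only \(n\) terms, \(\sum_i 2^{2q|\lambda_i|} \sim \max_i 2^{2q|\lambda_i|} = 2^{2q|\lambdabfm|_\infty}\), which is precisely the weight in \eqref{eq:norm_equivalence_GK}. The admissibility conditions \(-\tilde{\gamma} < s + \delta_{i,j}q < \gamma\) then collapse for \(q \geq 0\) to the stated \(-\tilde{\gamma} < s\) and \(q + s < \gamma\). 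Identity \eqref{eq:norm_equivalence_GK_dual} follows verbatim from the same argument based instead on the dual-coefficient equivalence \eqref{eq:norm_equivalence_dual}, which merely interchanges the roles of \(\gamma\) and \(\tilde{\gamma}\).

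For \(q < 0\) the space is defined by duality, and I would invoke \(\big(\bigcap_i X_i\big)' = \sum_i X_i'\) together with \(\big(\bigotimes_j H^{t_j}\big)' = \bigotimes_j H^{-t_j}\) to identify \(\Hfrak^{q,s}(\square) = \sum_{i=1}^n \bigotimes_{j} H^{s+\delta_{i,j}q}([0,1])\). Characterising a finite sum of Hilbert spaces in coefficients amounts to distributing each coefficient \(\langle\tilde{\psi}_{\lambdabfm},u\rangle\) optimally among the summands, which replaces the maximum of the weights by their minimum: \(\min_i 2^{2q|\lambda_i|} = 2^{2q|\lambdabfm|_\infty}\) precisely because \(q < 0\). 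Equivalently, one may argue directly on the dual pairing, expanding \(v \in \Hfrak^{-q,-s}(\square)\) as \(v = \sum_{\lambdabfm}\langle v,\psi_{\lambdabfm}\rangle\tilde{\psi}_{\lambdabfm}\) so that, by \eqref{eq:biorthogonality}, \(\langle u,v\rangle = \sum_{\lambdabfm}\langle v,\psi_{\lambdabfm}\rangle\langle\tilde{\psi}_{\lambdabfm},u\rangle\); taking the supremum over \(v\) of this \(\ell^2\)--\(\ell^2\) pairing, after inserting the equivalence for \(\Hfrak^{-q,-s}(\square)\) already established for nonnegative first index, returns the weighted \(\ell^2\)-norm of \((\langle\tilde{\psi}_{\lambdabfm},u\rangle)\) with the claimed weights.

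I expect the \(q < 0\) case to be the main obstacle. The upper bound \(\lesssim\) is a direct Cauchy--Schwarz estimate, but the matching lower bound \(\gtrsim\) requires constructing a near-optimal test function \(v\), namely the one whose dual coefficients are \(\langle v,\psi_{\lambdabfm}\rangle = 2^{2q|\lambdabfm|_\infty + 2s|\lambdabfm|_1}\langle\tilde{\psi}_{\lambdabfm},u\rangle\), and then verifying that it indeed belongs to \(\Hfrak^{-q,-s}(\square)\) with the right norm. Throughout, one must keep careful track of which of \(\gamma\) and \(\tilde{\gamma}\) governs each endpoint and check that the admissibility conditions for the dual space translate back into the stated range; the book-keeping of the finitely many tensor factors is routine but must be carried out cleanly so that no regularity threshold is exceeded.
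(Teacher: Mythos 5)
Your proposal is correct and follows essentially the same route as the paper: for \(q \geq 0\) it tensorises the univariate norm equivalence and observes that summing the weights \(2^{2s|\lambdabfm|_1}\sum_i 2^{2q|\lambda_i|}\) over the \(n\) intersected factors yields \(2^{2q|\lambdabfm|_\infty + 2s|\lambdabfm|_1}\), and for \(q<0\) it argues by duality on the pairing \(\langle u,v\rangle\), using the already-established dual equivalence for \(\Hfrak^{-q,-s}(\square)\) with \(-q\geq 0\) and a near-optimal test function for the lower bound. The only cosmetic difference is that you also sketch an alternative for \(q<0\) via \(\big(\bigcap_i X_i\big)'=\sum_i X_i'\) and the min-of-weights characterisation of a sum of weighted \(\ell^2\)-spaces, which the paper does not use.
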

\begin{proof}
	Let \(q \geq 0\) and \(s \in \Rbbb\). 
	For the sake of simplicity, we fix the dimension to \(n = 2\), but we
	emphasise that the same arguments also work in higher dimensions.

	We will basically follow the arguments
	that were used in \cite{HvR24} for classical Sobolev spaces. 
	By standard tensor-product
	arguments, for \(u =
	\sum_{\lambdabfm \in\nablabfm} u_{\lambdabfm} \psi_{\lambdabfm}\), there
	holds with \eqref{eq:norm_equivalence}, and \(u_{\lambdabfm} \isdef \langle
	\tilde{\psi}_{\lambdabfm}, u \rangle\),
	\begin{align*}
		\|u\|_{H^{q+s}([0,1]) \otimes H^{s}([0,1])}^2
		&\sim \sum_{\lambdabfm \in\nablabfm} 2^{2(q+s)|\lambda_1| + 2s|\lambda_2|}
		|u_{\lambdabfm}|^2, \\
		\|u\|_{H^{s}([0,1]) \otimes H^{q+s}([0,1])}^2
		&\sim \sum_{\lambdabfm \in\nablabfm} 2^{2s|\lambda_1| + 2(q+s)|\lambda_2|}
		|u_{\lambdabfm}|^2,
	\end{align*}
	provided that \(-\tilde{\gamma} < s, q+s < \gamma\). Therfore, as
	\(q+s \geq s\), there holds
	\begin{align*}
		2^{2(q+s)|\lambda_1| + 2s|\lambda_2|} + 2^{2s|\lambda_1| +
		2(q+s)|\lambda_2|}
		&\sim 2^{2(q+s)|\lambdabfm|_\infty + 2s \min\{|\lambda_1|,
		|\lambda_2|\}}\\
		&= 2^{2q|\lambdabfm|_\infty + 2s|\lambdabfm|_1}.
	\end{align*}
	Hence, \eqref{eq:norm_equivalence_GK} follows for \(q \geq 0\).
	With exactly the same arguments, we can also show
	\eqref{eq:norm_equivalence_GK_dual} for \(q \geq 0\).

	For \(q < 0\), we use a duality argument to show both asymptotic
	inequalities. By duality, there holds
	\begin{align*}
		\|u\|_{\Hfrak^{q,s}(\square)} = \sup_{\|v\|_{\Hfrak^{-q, -s}(\square)} = 1}
		\langle u, v \rangle.
	\end{align*}
	Hence, writing \(v = \sum_{\lambdabfm \in\nablabfm} \tilde{v}_{\lambdabfm}
	\tilde{\psi}_{\lambdabfm}\), we obtain by the biorthogonality that
	\begin{align*}
		\|u\|_{\Hfrak^{q,s}(\square)} &= \sup_{\|v\|_{\Hfrak^{-q, -s}(\square)} = 1}
		\sum_{\lambdabfm \in\nablabfm} u_{\lambdabfm} \tilde{v}_{\lambdabfm} \\
		&\leq \left[\sum_{\lambdabfm \in \nablabfm} 2^{2q|\lambdabfm|_\infty +
		2s |\lambdabfm|_1} |u_{\lambdabfm}|^2 \right]^{\frac{1}{2}}
		\sup_{\|v\|_{\Hfrak^{-q, -s}(\square)} = 1}
		\left[\sum_{\lambdabfm \in \nablabfm} 2^{-2q|\lambdabfm|_\infty -
		2s |\lambdabfm|_1} |\tilde{v}_{\lambdabfm}|^2 \right]^{\frac{1}{2}}\\
		&\sim 
		\left[\sum_{\lambdabfm \in \nablabfm} 2^{2q|\lambdabfm|_\infty +
		2s |\lambdabfm|_1} |u_{\lambdabfm}|^2 \right]^{\frac{1}{2}},
	\end{align*}
	where we have used \eqref{eq:norm_equivalence_GK_dual} for \(-q \geq 0\).

	For the lower bound, we remark that there holds
	\begin{align*}
		&\left[\sum_{\lambdabfm \in \nablabfm} 2^{2q|\lambdabfm|_\infty +
		2s |\lambdabfm|_1} |u_{\lambdabfm}|^2 \right]^{\frac{1}{2}}\\
		&\qquad= \sup_{\|[2^{-q|\lambdabfm|_{\infty} -s|\lambdabfm|_1}
		\tilde{v}_{\lambdabfm}]_{\lambdabfm}\|_{\ell^2(\nablabfm)} = 1} 
		\langle \ubfm, \tilde{\vbfm} \rangle_{\ell^2(\nablabfm)} \\
		&\qquad= \sup_{\|[2^{-q|\lambdabfm|_{\infty} -s|\lambdabfm|_1}
		\tilde{v}_{\lambdabfm}]_{\lambdabfm}\|_{\ell^2(\nablabfm)} = 1} 
		\left\langle u, \sum_{\lambdabfm \in\nablabfm} \tilde{v}_{\lambdabfm}
		\tilde{\psi}_{\lambdabfm} \right\rangle \\
		&\qquad\leq \|u\|_{\Hfrak^{q,s}(\square)}	
		\sup_{\|[2^{-q|\lambdabfm|_{\infty} -s|\lambdabfm|_1}
		\tilde{v}_{\lambdabfm}]_{\lambdabfm}\|_{\ell^2(\nablabfm)} = 1} 
		\left\|\sum_{\lambdabfm \in\nablabfm} \tilde{v}_{\lambdabfm}
		\tilde{\psi}_{\lambdabfm}\right\|_{\Hfrak^{-q,-s}(\square)}\\
		&\qquad\sim \|u\|_{\Hfrak^{q,s}(\square)}.
	\end{align*}
	Note that we have again used \eqref{eq:norm_equivalence_GK_dual} 
	for \(-q \geq 0\). This shows \eqref{eq:norm_equivalence_GK}.

	The same arguments also allow us to show
	\eqref{eq:norm_equivalence_GK_dual} for \(q < 0\), completing the proof of
	this theorem.
\end{proof}

\begin{remark}
	We note that the one-sided 
	upper bound in \eqref{eq:norm_equivalence_GK} can be extended for 
	\(-\tilde{d} < s, q+s < \gamma\), whereas the one-sided lower bound
	can be extended to \(-\tilde{\gamma} < s, q+s < d\). 
	Herein, \(d\) and \(\tilde{d}\) denote the order of
	polynomial exactness of \(\Psi\) and \(\tilde{\Psi}\), 
	which implies in turn that \(\Psi\) and \(\tilde{\Psi}\) admit
	\(\tilde{d}\) and \(d\) vanishing moments, respectively.
	Similarly, we can also
	extend the upper and lower bounds in \eqref{eq:norm_equivalence_GK_dual} 
	up to \(-d\) and \(\tilde{d}\), respectively.
\end{remark}

We note that there holds \(\theta_{\mu}, \psi_{\lambdabfm} \in 
	\Hfrak^{0,\gamma}(\square) \subset \Hfrak^{\gamma, 0}(\square) =
H^{\gamma}(\square)\) by the tensor-product structure.
With the above norm equivalences, it is easy to see that
\(\Hfrak^{0,\gamma}(\square)\) is continuously
embedded into \(\Hfrak^{q,s}(\square)\) if either
\(q\geq0\) and \(s<\gamma-q\),
or if \(q<0\) and \(s<\gamma\), see also \cite{BHW23}.
Furthermore, we can derive the following proposition.
\begin{proposition}
	The spaces \(\Hfrak^{q,s}(\square)\) are Hilbert spaces
	when equipped with the
	inner product
	\begin{equation*}
		\langle u,v\rangle_{\Hfrak^{q,s}(\square)} \isdef \sum_{\lambdabfm \in\nablabfm}
		2^{2q|\lambdabfm|_\infty + 2s|\lambdabfm|_1}
		\langle\tilde{\psi}_{\lambdabfm}, u\rangle \langle\tilde{\psi}_{\lambdabfm}, v\rangle.
	\end{equation*}
	Moreover, we have the Gelfand triples 
	\(\Hfrak^{q,s}(\square)\hookrightarrow
		H^q(\square)
	\hookrightarrow \Hfrak^{q,-s}(\square)\)
	for \(s \geq 0\).
\end{proposition}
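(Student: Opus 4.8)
The plan is to transport all three spaces to weighted sequence spaces via the coefficient map and then do the functional analysis there, relying on Theorem~\ref{thm:Griebel_Knapek_norm_equivalence} for the norm equivalences. For the first assertion, I would introduce the analysis map \(Tu \isdef \big(\langle\tilde{\psi}_{\lambdabfm},u\rangle\big)_{\lambdabfm\in\nablabfm}\) and observe that the displayed bilinear form is precisely the pullback under \(T\) of the inner product on the weighted space \(\ell^2_w(\nablabfm)\) with weight \(w_{\lambdabfm}=2^{2q|\lambdabfm|_\infty+2s|\lambdabfm|_1}\). Bilinearity and symmetry are then immediate, and positive definiteness holds because \(\langle u,u\rangle_{\Hfrak^{q,s}(\square)}=0\) forces every coefficient \(\langle\tilde{\psi}_{\lambdabfm},u\rangle\) to vanish, whence \(u=0\) since \(\Psi\) is a Riesz basis. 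By Theorem~\ref{thm:Griebel_Knapek_norm_equivalence} the induced norm \(\langle u,u\rangle_{\Hfrak^{q,s}(\square)}^{1/2}\) is equivalent to the original norm, and in fact \(T\) is a norm isomorphism of \(\Hfrak^{q,s}(\square)\) onto the complete space \(\ell^2_w(\nablabfm)\); completeness of the new inner-product norm follows at once, so \(\Hfrak^{q,s}(\square)\) is a Hilbert space.

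For the Gelfand triple I would again pass to coefficients and read off the three weights: using \eqref{eq:norm_equivalence_GK} and \eqref{eq:norm_equivalence} (with \(H^q(\square)=\Hfrak^{q,0}(\square)\)), the spaces \(\Hfrak^{q,s}(\square)\), \(H^q(\square)\), and \(\Hfrak^{q,-s}(\square)\) correspond to the weights \(w^+_{\lambdabfm}=2^{2q|\lambdabfm|_\infty+2s|\lambdabfm|_1}\), \(w^0_{\lambdabfm}=2^{2q|\lambdabfm|_\infty}\), and \(w^-_{\lambdabfm}=2^{2q|\lambdabfm|_\infty-2s|\lambdabfm|_1}\). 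Since \(s\geq 0\) and the levels satisfy \(|\lambdabfm|_1\geq n j_0\), the ratios \(w^0/w^+ = w^-/w^0 = 2^{-2s|\lambdabfm|_1}\) are uniformly bounded, which yields the continuous inclusions \(\Hfrak^{q,s}(\square)\hookrightarrow H^q(\square)\hookrightarrow\Hfrak^{q,-s}(\square)\). Density of each inclusion follows because finitely supported coefficient sequences are dense in every \(\ell^2_w(\nablabfm)\), so their wavelet expansions are dense in all three spaces.

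It then remains to identify \((\Hfrak^{q,s}(\square))'\) with \(\Hfrak^{q,-s}(\square)\) through the \(H^q(\square)\)-pivot. The key algebraic fact is the identity \((w^0_{\lambdabfm})^2 = w^+_{\lambdabfm}\,w^-_{\lambdabfm}\): splitting \(w^0=(w^+)^{1/2}(w^-)^{1/2}\) and applying Cauchy--Schwarz shows that the \(\ell^2_{w^0}\)-pairing extends to a bounded pairing between \(\ell^2_{w^+}(\nablabfm)\) and \(\ell^2_{w^-}(\nablabfm)\), and the standard weighted duality shows every bounded functional on \(\ell^2_{w^+}(\nablabfm)\) is represented by a unique element of \(\ell^2_{w^-}(\nablabfm)\) with equivalent norm. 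Transporting this through \(T\) and noting that the pairing \(\langle f,u\rangle=\sum_{\lambdabfm}2^{2q|\lambdabfm|_\infty}\langle\tilde{\psi}_{\lambdabfm},f\rangle\langle\tilde{\psi}_{\lambdabfm},u\rangle\) extends the \(H^q(\square)\) inner product, one obtains \((\Hfrak^{q,s}(\square))'=\Hfrak^{q,-s}(\square)\), which closes the triple.

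The substantive content is entirely carried by Theorem~\ref{thm:Griebel_Knapek_norm_equivalence}; once the spaces are coded as weighted \(\ell^2\)-spaces the remaining steps are routine. The main point demanding care is the bookkeeping of admissible parameters: applying the norm equivalences for exponents \(s\), \(0\), and \(-s\) simultaneously requires \(s<\tilde{\gamma}\) and \(q+s<\gamma\) (so that \eqref{eq:norm_equivalence_GK} and its dual hold at all three levels), and I would state this range explicitly. The second subtlety is ensuring that the pivot in the duality is the \(H^q(\square)\) inner product, i.e.\ the \(w^0\)-weighted pairing rather than the \(L^2(\square)\) pairing, so that the relation \((w^0)^2=w^+w^-\) is the one governing the identification.
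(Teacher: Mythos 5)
Your argument is correct, and it takes the route the paper intends: the proposition is stated there without proof, as an immediate consequence of the norm equivalences of Theorem~\ref{thm:Griebel_Knapek_norm_equivalence}, and your transport to weighted \(\ell^2\)-spaces (with the weight identity \((w^0)^2=w^+w^-\) driving the duality through the \(H^q(\square)\)-pivot) is precisely the natural instantiation of that remark. Your observation that the admissible parameter range for \(q\), \(s\), and \(-s\) must be tracked simultaneously is a point the paper leaves implicit; the only simplification available is that completeness of \(\Hfrak^{q,s}(\square)\) in the new norm follows directly from norm equivalence with the original (complete) norm, without needing surjectivity of the analysis map.
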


\subsection{Besov Spaces}

When dealing with best \(N\)-term approximation, one 
has to consider Besov spaces, which are defined by three 
indices. Primarily, we have the regularity \(\alpha\) and the 
integrability \(p\), and secondarily, a fine index \(\tau\).
\begin{definition}
	For \(\alpha > 0\) and \(0 < p,\tau < \infty\), we define
	the quantity
	\begin{equation}
		\|\ubfm\|_{\bbfm^{\alpha, p}_{\tau}} \isdef \left[\sum_{m \geq j_0}
			2^{\tau m\alpha} \left[ \sum_{\mu \in\hexagon_m}
		|u_{\mu}|^{p}\right]^{\frac{\tau}{p}} \right]^{\frac{1}{\tau}},
		\label{eq:Besov_seminorm_isotropic}
	\end{equation}
	and \(\bbfm^{\alpha,p}_{\tau}\) as the space containing all
	vectors for which the above quantity
	\eqref{eq:Besov_seminorm_isotropic} is finite.
	For \(\max\{p,\tau\} = \infty\), we apply the usual modifications.
	Then, the \emph{isotropic Besov space} \(B^{\alpha,p}_{\tau}(\square)\) 
	is defined as the space containing all functions
	\(u = \Theta \ubfm\), for which the norm
	\begin{equation*}
		\|u\|_{B^{\alpha,p}_{\tau}(\square)} \isdef 
		\|\ubfm^{(p)}\|_{\bbfm^{\alpha,p}_{\tau}}
	\end{equation*}
	is finite.
\end{definition}
\begin{remark}
	Classically, the Besov spaces \(B^{\alpha,p}_{\tau}(\square)\) are defined
	by the decay behaviour of a function's moduli of continuity.
	However, if the wavelets involved satisfy
	\begin{enumerate}
		\item \(\theta_{\mu} \in B^{\beta,p}_{\tau}(\square)\) 
			for some \(\beta > \alpha > \max\{0, n(\nicefrac{1}{p}-1)\}\) and
		\item \(\theta_{\mu}\) has polynomial order \(d >
			\max\{\alpha, n(\nicefrac{1}{p} - 1)\}\),
	\end{enumerate}
	then the classical Besov seminorm and the discrete Besov seminorm
	\eqref{eq:Besov_seminorm_isotropic} are equivalent, cf.\
	\cite{Coh03} and the references therein.
	Besides, the Besov spaces can also be characterised by a
	Littlewood-Paley decomposition, cf.\ \cite{Tri92}.
\end{remark}
As we will see, the classical 
Besov spaces and also the Besov spaces of
dominating mixed smoothness will not be the right spaces
for the approximation with respect to a tensor-product basis
in an isotropic energy space. For this, we need to consider 
Besov spaces of hybrid regularity as introduced in \cite{BHW23}, 
which are characterised by the decay of the hyperbolic
wavelet coefficients.
\begin{definition}
	For given \(q \geq 0\), \(s > 0\), and \(0 < p,\tau < \infty\),
	we define
	\begin{equation}\label{eq:Besov_seminorm_tau}
		\|\ubfm\|_{\bfrak^{q, s, p}_\tau} \isdef 
		\left[ \sum_{\jbfm \geq\jbfm_0} 2^{\tau (q |\jbfm|_\infty + 
			s |\jbfm|_1)} \left[ \sum_{\lambdabfm \in\nablabfm_{\jbfm}}
			\big| u_{\lambdabfm} \big|^p \right]^{\frac{\tau}{p}} 
		\right]^{\frac{1}{\tau}},
	\end{equation}
	with the usual modifications in the case
	\(\max\{p,\tau\} = \infty\),
	and the space \(\bfrak^{q,s,p}_{\tau}\) as the space containing
	all vectors for which the quantity \eqref{eq:Besov_seminorm_tau} is
	finite.
	The \emph{Besov space of hybrid regularity} 
	\(\Bfrak^{q, s, p}_\tau(\square)\) is defined as
	the space containing all functions \(u = \Psi \ubfm\), for which the norm
	\begin{equation}
		\label{eq:Besov_hybrid_norm}
		\|u\|_{\Bfrak^{q,s,p}_{\tau}(\square)} \isdef
		\|\ubfm^{(p)}\|_{\bfrak^{q,s,p}_{\tau}}
	\end{equation}
	is finite.
\end{definition}
\begin{remark}
	At the first glance, the expression
	\eqref{eq:Besov_seminorm_tau} looks different
	from that in \cite{BHW23}. This can, however,
	be explained by the fact that the wavelet bases in
	\cite{BHW23} are normalised in \(L^\infty(\square)\).
	Moreover, a hyperbolic Littlewood-Paley decomposition of 
	hybrid regularity Besov spaces has recently been provided in
	\cite{ELV24}.
\end{remark}
If \(p=\tau=2\), then we have the identities
\begin{equation*}
	\Bfrak^{q,s,2}_{2}(\square) = \Hfrak^{q,s}(\square),\quad
	B^{\alpha, 2}_{2}(\square) = H^{\alpha}(\square).
\end{equation*}
Indeed, this can be immediately concluded from
\eqref{eq:Besov_seminorm_isotropic} and \eqref{eq:norm_equivalence}, or
\eqref{eq:Besov_seminorm_tau} and Theorem
\ref{thm:Griebel_Knapek_norm_equivalence}, respectively.
In particular, we have extended the norm equivalences from \cite{GK00,GK09} to
negative \(q\), and we have shown that in the case \(p=\tau=2\), the hybrid
regularity Besov spaces from \cite{BHW23} agree with the hybrid regularity Sobolev spaces.

\section{Approximation and Interpolation}
\label{sec:Approximation_Interpolation}

The aim of this section is to briefly summarise the interpolation
between vector spaces and the best
\(N\)-term approximation,
which is a kind of nonlinear approximation.
Our goal is to characterise the approximation spaces for the
anisotropic tensor-product wavelet basis.
All the results provided on interpolation and its interplay with
approximation can also be found in \cite{DeV98}.

\subsection{Interpolation}

First, we want to specify the abstract interpolation between normed vector 
spaces by means of the \(K\)-functional, cf.\ \cite{LP64}.
We consider two normed vector spaces, \((V, \| \cdot \|_V)\)
and \((W, \| \cdot \|_W)\), and we assume that
\(W\) is continuously embedded in \(V\), that is, 
\(\| \cdot \|_V \lesssim \| \cdot \|_W\).
Moreover, let \(|\cdot|_W\) be a quasi-seminorm on \(W\). 
Then, for \(u \in V\) and \(t > 0\), 
we define the \(K\)-functional by
\[
	K(u, t) \isdef \inf_{w \in W} \| u - w \|_V + t|w|_W.
\]
For \(\theta \in (0, 1)\), and \(\tau \in (0, \infty]\), we define
\[
	|u|_{(V, W)_{\theta, \tau}} 
	\isdef \left( \int_0^\infty \frac{1}{t} \big[ t^{-\theta} 
		K(u, t) \big]^\tau \dint t
	\right)^{\frac{1}{\tau}},
	\qquad \tau < \infty,
\]
and
\[
	|u|_{(V, W)_{\theta, \infty}}
	\isdef \sup_{t > 0} t^{-\theta}K(u, t).
\]
Finally, we define the interpolation spaces between \(V\) and \(W\) as
\[
	(V, W)_{\theta, \tau} \isdef \big\{ u \in V : |u|_{(V, W)_{\theta, \tau}} 
	< \infty \big\}.
\]
The term \emph{interpolation} is justified by the property
\[
	|u|_{(V, W)_{\theta, \tau}} \lesssim \| u \|_{V}^{1 - \theta}
	|u|_{W}^{\theta},
	\qquad u \in W \subset V,
\]
for example. For a proof and further results on interpolation,
consult \cite{Tri78}, for instance.

\subsection{Approximation}

A question which is closely related to interpolation is the following: 
Given a Riesz basis
\(\Psi = \{ \psi_{\lambda} : \lambda \in \nabla \}\) of a space \(V\),
for which subspace \(W \subset V\) can we approximate all functions 
\(u \in W\) with at most \(N\) terms at the rate \(N^{-s}\)? This is a topic
of \emph{nonlinear approximation} and, in particular, best \(N\)-term
approximation, as we consider trial spaces
\[
	V_N \isdef \left\{ \sum_{\lambda \in \Lambda} c_\lambda \psi_\lambda 
	: c_\lambda \in \Cbbb, \ |\Lambda| \leq N \right\}.
\]
Such trial spaces are nonlinear, as for \(u, v \in V_N\) there holds in 
general only \(u + v \in V_{2N}\), cf.\ \cite{DeV98}. 

We next define the \(N\)-term error
\[
	E_N(u) \isdef \inf_{v_N \in V_N} \| u - v_N \|_V,
\]
the seminorms
\begin{align*}
	|u|_{\Acal^s_\tau(V)} &\isdef \left( \sum_{N > 0} \frac{1}{N} \left[N^{s}
	E_N(u) \right]^{\tau} \right)^{\frac{1}{\tau}}, \quad 0 < \tau < \infty,\\
	|u|_{\Acal^s_{\infty}(V)} &\isdef \sup_{N \geq 0} N^s E_N(u),
\end{align*}
and the approximation spaces
\[
	\Acal^s_\tau(V) \isdef \left\{ u \in V : |u|_{\Acal^s_\tau(V)} < \infty \right\}.
\]

One can show that the approximation spaces \(\Acal^s_\tau(V)\) 
can, under some circumstances, be fully characterised by interpolation.
We just quote the result. For a proof, see \cite{DeV98} and 
the references therein.

\begin{theorem}
	\label{thm:approximation_interpolation}
	Consider a vector space \(V\) and a subspace \(W \subset V\), and assume 
	that there is a number \(r > 0\) such that there holds a Jackson inequality
	\[
		E_N(u) \lesssim N^{-r} |u|_W
	\]
	and a Bernstein inequality
	\[
		|u_N|_W \lesssim N^r \| u \|_V.
	\]
	Then, for each \(s \in (0, r)\) and each \(\tau \in (0, \infty]\), there holds
	\[
		\Acal^s_\tau(V) = (V, W)_{\frac{s}{r}, \tau}
	\]
	with equivalent norms.
\end{theorem}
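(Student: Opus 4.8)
The plan is to prove the two embeddings $\Acal^s_\tau(V) \hookrightarrow (V,W)_{\theta,\tau}$ and $(V,W)_{\theta,\tau} \hookrightarrow \Acal^s_\tau(V)$, where I set $\theta = s/r \in (0,1)$. The bridge between the two seminorms is the pointwise comparison of the $N$-term error $E_N(u)$ with the $K$-functional at time $t = N^{-r}$; under this identification the approximation weight $N^s$ becomes $t^{-\theta}$. Since $E_N(u)$ is nonincreasing in $N$ while $K(u,\cdot)$ is nondecreasing with $K(u,t)/t$ nonincreasing, I would first replace both defining expressions by equivalent dyadic sums, namely $|u|^\tau_{\Acal^s_\tau(V)} \sim \sum_k [2^{ks}E_{2^k}(u)]^\tau$ and $|u|^\tau_{(V,W)_{\theta,\tau}} \sim \sum_k [2^{kr\theta}K(u,2^{-kr})]^\tau$, reducing everything to a comparison of these two discrete sequences.

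For the Jackson (direct) embedding, I would use that for every $w \in W$ a best $N$-term approximant obeys $E_N(u) \le \|u-w\|_V + E_N(w)$, and the Jackson inequality gives $E_N(w) \lesssim N^{-r}|w|_W$. Taking the infimum over $w$ yields the pointwise estimate $E_N(u) \lesssim K(u, N^{-r})$. Inserting this into the dyadic form of $|u|_{\Acal^s_\tau(V)}$ and using $2^{ks} = 2^{kr\theta}$ immediately gives $|u|_{\Acal^s_\tau(V)} \lesssim |u|_{(V,W)_{\theta,\tau}}$.

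The Bernstein (inverse) embedding is the harder direction. Here I would work with dyadic best approximants $u_{2^k} \in V_{2^k}$ and telescope $u_{2^K} = u_1 + \sum_{k=0}^{K-1}(u_{2^{k+1}} - u_{2^k})$. Each difference uses at most $3 \cdot 2^k \le 2^{k+2}$ terms, hence lies in $V_{2^{k+2}}$, so the Bernstein inequality applied to this element together with the triangle inequality gives $|u_{2^{k+1}} - u_{2^k}|_W \lesssim 2^{(k+2)r}\|u_{2^{k+1}} - u_{2^k}\|_V \lesssim 2^{kr}E_{2^k}(u)$. Choosing $w = u_{2^K}$ in the $K$-functional at $t = 2^{-Kr}$ then yields $K(u, 2^{-Kr}) \lesssim E_{2^K}(u) + 2^{-Kr}\sum_{k \le K} 2^{kr}E_{2^k}(u)$. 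The main obstacle is to sum this bound against the weights $2^{Kr\theta\tau}$ so as to recover $|u|_{\Acal^s_\tau(V)}$: writing $c_k = 2^{ks}E_{2^k}(u)$, the convolution term becomes $2^{-K r(1-\theta)}\sum_{k\le K} 2^{kr(1-\theta)}c_k$, whose $\ell^\tau$-norm is controlled by $\big(\sum_k c_k^\tau\big)^{1/\tau}$ precisely by the discrete Hardy inequality, which requires the exponent $\mu = r(1-\theta)$ to be strictly positive. This is exactly where the hypothesis $s < r$, i.e. $\theta < 1$, is essential.

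Combining the two directions gives $|u|_{\Acal^s_\tau(V)} \sim |u|_{(V,W)_{\theta,\tau}}$ up to the common additive term $\|u\|_V$ present in both norms, into which the geometric low-order contribution $2^{-Kr}|u_1|_W \lesssim 2^{-Kr}\|u\|_V$ from the telescoping is absorbed, and hence the claimed equality of spaces with equivalent norms. The only genuinely technical ingredients are the discrete Hardy inequality in the inverse direction and the monotonicity-based passage between the integral or series definitions and their dyadic counterparts; both are standard and are collected, for instance, in \cite{DeV98}.
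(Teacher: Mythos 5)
Your proposal is correct and is precisely the standard Jackson--Bernstein argument of DeVore and Popov: the paper itself offers no proof of this theorem, stating only ``we just quote the result'' and referring to \cite{DeV98}, and your sketch reproduces the argument found there -- the pointwise bound \(E_N(u)\lesssim K(u,N^{-r})\) for the direct embedding, and the telescoping of dyadic near-best approximants combined with the discrete Hardy inequality (where \(s<r\) enters) for the inverse one. The only cosmetic caveat is that in the nonlinear setting best approximants need not exist, so the \(u_{2^k}\) should be taken as near-best approximants; this changes nothing in the estimates.
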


For best \(N\)-term approximation with respect to the isotropic 
wavelet basis \(\Theta = \{\theta_{\mu} : \mu \in \hexagon\}\) 
on the unit cube, it is well known that one can derive a Jackson 
and Bernstein inequality between the spaces \(H^{q}(\square)\) and
\(B^{q+rn,\tau}_{\tau}(\square)\) where \(\frac{1}{\tau} = r +\frac{1}{2}\), 
cf.\ \cite{DeV98}. Therefore, the following theorem holds.

\begin{theorem}
	Consider the space \(H^{q}(\square)\) and the properly scaled Riesz basis
	\(\Theta = \{\theta_{\mu} : \mu \in \textnormal{\(\hexagon\)}\}\). Then, for
	\(\frac{1}{\tau} = r +\frac{1}{2}\) and \( 0 < \kappa \leq \infty\), 
	there holds
	\begin{equation*}
		A^{s}_{\kappa} \big(H^{q}(\square)\big) = \big(H^{q}(\square),\ 
		B^{q+rn,\tau}_{\tau}(\square)\big)_{\frac{s}{r},\kappa}, \quad 0 < s <
		r,
	\end{equation*}
	provided that \(\Theta \subset
	B^{q + rn + \epsilon, \tau}_{\tau}(\square)\) for some \(\epsilon > 0\)
	and that the wavelets admit polynomial exactness
	\(d > \max\{q+rn, n(r-\frac{1}{2})\}\).
\end{theorem}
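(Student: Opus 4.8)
The plan is to reduce the statement to a pair of inequalities between sequence spaces and then invoke Theorem~\ref{thm:approximation_interpolation} with \(V = H^{q}(\square)\) and \(W = B^{q+rn,\tau}_{\tau}(\square)\). First I would pass to the \(L^{2}\)-normalised coefficients \(u_{\mu} = \langle\tilde{\theta}_{\mu},u\rangle\) and set \(a_{\mu} \isdef 2^{q|\mu|}u_{\mu}\). By the norm equivalence \eqref{eq:norm_equivalence} applied with \(s = q\), this yields \(\|u\|_{H^{q}(\square)} \sim \|(a_{\mu})\|_{\ell^{2}(\hexagon)}\). Next I would rewrite the Besov norm in the same coordinates: since \(u_{\mu}^{(\tau)} = 2^{n|\mu|(1/2-1/\tau)}u_{\mu} = 2^{-rn|\mu|}u_{\mu}\) by the scaling of the basis and the relation \(1/\tau = r+1/2\), and since \(p=\tau\) makes the inner \(\ell^{p}\)-over-\(\ell^{\tau}\) sum in \eqref{eq:Besov_seminorm_isotropic} a plain \(\ell^{\tau}\)-sum, the weight \(2^{\tau m(q+rn)}\) collapses against \(2^{-rn\tau|\mu|}\) on level \(|\mu| = m\) to leave \(2^{\tau q m}\). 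Hence \(\|u\|_{B^{q+rn,\tau}_{\tau}(\square)} \sim \|(a_{\mu})\|_{\ell^{\tau}(\hexagon)}\). Because \(\tau \leq 2\), the embedding \(\ell^{\tau}\hookrightarrow\ell^{2}\) then delivers the continuous inclusion \(W\hookrightarrow V\) required by the abstract theory.

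With these two identifications in place, the best \(N\)-term error \(E_{N}(u)\) in \(H^{q}(\square)\) coincides, up to constants, with the best \(N\)-term error of the sequence \((a_{\mu})\) in \(\ell^{2}(\hexagon)\), since minimising \(\sum_{\mu}2^{2q|\mu|}|u_{\mu}-c_{\mu}|^{2}\) over supports of cardinality \(\leq N\) is achieved by retaining the \(N\) largest \(|a_{\mu}|\). It then remains to prove Jackson and Bernstein at the level of sequences. For the Jackson inequality I would invoke Stechkin's lemma: writing \((a_{k}^{*})\) for the non-increasing rearrangement, the bound \(a_{N}^{*}\lesssim N^{-1/\tau}\|(a_{\mu})\|_{\ell^{\tau}}\) together with the splitting \(\sum_{k>N}(a_{k}^{*})^{2}\leq (a_{N}^{*})^{2-\tau}\|(a_{\mu})\|_{\ell^{\tau}}^{\tau}\) gives \(E_{N}(u)\lesssim N^{-r}\|(a_{\mu})\|_{\ell^{\tau}}\sim N^{-r}|u|_{B^{q+rn,\tau}_{\tau}(\square)}\), using \(r = 1/\tau - 1/2\). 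For the Bernstein inequality, if \(u_{N}\) has at most \(N\) nonvanishing coefficients then Hölder's inequality with exponent \(2/\tau\) gives \(\|(a_{\mu})\|_{\ell^{\tau}}\leq N^{1/\tau-1/2}\|(a_{\mu})\|_{\ell^{2}} = N^{r}\|(a_{\mu})\|_{\ell^{2}}\), that is, \(|u_{N}|_{B^{q+rn,\tau}_{\tau}(\square)}\lesssim N^{r}\|u_{N}\|_{H^{q}(\square)}\).

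Finally I would apply Theorem~\ref{thm:approximation_interpolation} with rate \(r\) to conclude \(\Acal^{s}_{\kappa}(H^{q}(\square)) = (H^{q}(\square),B^{q+rn,\tau}_{\tau}(\square))_{s/r,\kappa}\) for all \(0 < s < r\) and \(0 < \kappa \leq \infty\). The role of the two provisos is precisely to make this reduction legitimate: the polynomial exactness \(d > \max\{q+rn,n(r-\tfrac{1}{2})\}\) — note \(n(1/\tau-1) = n(r-\tfrac{1}{2})\) — together with \(\Theta\subset B^{q+rn+\epsilon,\tau}_{\tau}(\square)\) guarantees, via the remark following \eqref{eq:Besov_seminorm_isotropic}, that the wavelet-coefficient quantity \(\|(a_{\mu})\|_{\ell^{\tau}}\) is genuinely equivalent to the classical Besov norm, so that \(W\) is the function space appearing in the statement and the finite wavelet sums \(u_{N}\) indeed lie in \(W\).

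I expect the only genuinely delicate point to be the bookkeeping in the second step, namely correctly propagating the \(L^{2}\)-versus-\(L^{\tau}\) normalisation so that the hybrid weight collapses to the pure \(H^{q}\) weight \(2^{\tau q|\mu|}\), together with checking that the provisos are strong enough for the Besov characterisation to hold on the required parameter range; the analytic heart (Stechkin's lemma and the Hölder estimate) is classical, and the abstract interpolation step is then immediate.
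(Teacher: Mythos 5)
Your argument is correct and follows the same route the paper takes: the paper states this result without proof, citing \cite{DeV98} for the Jackson and Bernstein inequalities, and the standard proof there---as well as the paper's own proof of the analogous tensor-product result in Theorem \ref{thm:approximation_spaces_for_wavelets}---is exactly your reduction via \eqref{eq:norm_equivalence} to a rescaled coefficient sequence, Stechkin's lemma for Jackson, H\"older for Bernstein, and then Theorem \ref{thm:approximation_interpolation}. Your reading of the two provisos as guaranteeing the equivalence of the discrete and classical Besov seminorms (via the remark following \eqref{eq:Besov_seminorm_isotropic}, using \(n(\nicefrac{1}{\tau}-1)=n(r-\tfrac{1}{2})\)) is also the intended one.
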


\subsection{Approximation with Tensor-Product Wavelets}
\label{sec:Approximation_tensor}

With the results of the previous two subsections,
we can now classify the approximation spaces
for any function space \(H^q(\square)\) with
respect to an anisotropic tensor-product 
wavelet basis.
Note that the following proof is implicitly contained in
\cite{Han10,HS10}, but simplifies a lot in the current setting.

\begin{theorem}
	Assume that \(r > 0\), \(-\gamma < q < \gamma\), and
	\begin{align*}
		\frac{1}{\tau} = r + \frac{1}{2}.
	\end{align*}
	Then, for \(0 < s < r\) and \(0 < \kappa \leq \infty\),
	the approximation spaces with respect to tensor-product wavelets are given by
	\begin{equation}
		\Afrak^s_\kappa\big(H^q(\square) \big) 
		= \big( H^q(\square), \ \Bfrak^{q, r, \tau}_\tau(\square)
		\big)_{\frac{s}{r}, \kappa}.
		\label{eq:approximation_spaces_for_wavelets}
	\end{equation}
	\label{thm:approximation_spaces_for_wavelets}
\end{theorem}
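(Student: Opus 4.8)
The plan is to produce a Jackson and a Bernstein inequality between \(V = H^q(\square)\) and \(W = \Bfrak^{q,r,\tau}_\tau(\square)\) with exponent \(r\), and then to invoke Theorem~\ref{thm:approximation_interpolation}. The decisive observation is that, after passing to the energy\hyp{}normalised coefficients, both norms collapse to plain sequence\hyp{}space norms and the mixed\hyp{}regularity weight disappears. Writing \(u = \sum_{\lambdabfm \in \nablabfm} u_{\lambdabfm} \psi_{\lambdabfm}\) with \(u_{\lambdabfm} = \langle \tilde{\psi}_{\lambdabfm}, u\rangle\) and setting \(\hat{u}_{\lambdabfm} \isdef 2^{q|\lambdabfm|_\infty} u_{\lambdabfm}\), the norm equivalence \eqref{eq:norm_equivalence} gives \(\|u\|_{H^q(\square)} \sim \|(\hat{u}_{\lambdabfm})_{\lambdabfm}\|_{\ell^2(\nablabfm)}\). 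On the other hand, since \(\tfrac{1}{\tau} = r + \tfrac12\), the \(L^\tau\)-normalised coefficients satisfy \(u_{\lambdabfm}^{(\tau)} = 2^{|\lambdabfm|_1(\frac12 - \frac1\tau)} u_{\lambdabfm} = 2^{-r|\lambdabfm|_1} u_{\lambdabfm}\). Taking \(p = \tau\) in \eqref{eq:Besov_seminorm_tau} merges the inner and outer sums into a single sum over \(\nablabfm\), and the factor \(2^{-r\tau|\lambdabfm|_1}\) coming from the renormalisation exactly cancels the mixed\hyp{}regularity weight \(2^{r\tau|\lambdabfm|_1}\), leaving
\[
	\|u\|_{\Bfrak^{q,r,\tau}_\tau(\square)}^\tau
	\sim \sum_{\lambdabfm \in \nablabfm} 2^{\tau q|\lambdabfm|_\infty}\, |u_{\lambdabfm}|^\tau
	= \big\|(\hat{u}_{\lambdabfm})_{\lambdabfm}\big\|_{\ell^\tau(\nablabfm)}^\tau .
\]

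Next I would exploit that the rescaled wavelets \(2^{-q|\lambdabfm|_\infty} \psi_{\lambdabfm}\) form a Riesz basis of \(H^q(\square)\), so that retaining \(N\) terms from \(\Psi\) corresponds precisely to retaining \(N\) coordinates of the coefficient sequence; hence \(E_N(u) \sim \sigma_N(\hat{u})_{\ell^2}\), where \(\sigma_N(\hat{u})_{\ell^2}\) denotes the best \(N\)-term error of \(\hat{u}\) in \(\ell^2(\nablabfm)\), realised by keeping the \(N\) largest coordinates. As \(\tfrac1\tau - \tfrac12 = r\) and \(\tau < 2\), Stechkin's lemma yields \(\sigma_N(\hat{u})_{\ell^2} \lesssim N^{-r} \|\hat{u}\|_{\ell^\tau}\), which by the two equivalences above is exactly the Jackson inequality \(E_N(u) \lesssim N^{-r} \|u\|_{\Bfrak^{q,r,\tau}_\tau(\square)}\). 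Conversely, if \(u_N\) is supported on at most \(N\) indices, the finite\hyp{}dimensional embedding \(\ell^2 \hookrightarrow \ell^\tau\) together with Hölder's inequality gives \(\|\hat{u}_N\|_{\ell^\tau} \leq N^{1/\tau - 1/2} \|\hat{u}_N\|_{\ell^2} = N^{r} \|\hat{u}_N\|_{\ell^2}\), that is, the Bernstein inequality \(\|u_N\|_{\Bfrak^{q,r,\tau}_\tau(\square)} \lesssim N^{r} \|u_N\|_{H^q(\square)}\).

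Finally, since \(\tau \leq 2\), the embedding \(\ell^\tau \hookrightarrow \ell^2\) shows \(\|\cdot\|_{H^q(\square)} \lesssim \|\cdot\|_{\Bfrak^{q,r,\tau}_\tau(\square)}\), so all hypotheses of Theorem~\ref{thm:approximation_interpolation} are met with \(V = H^q(\square)\), \(W = \Bfrak^{q,r,\tau}_\tau(\square)\), and rate \(r\). Applying it produces \eqref{eq:approximation_spaces_for_wavelets} for every \(0 < s < r\) and \(0 < \kappa \leq \infty\). I do not expect a genuine obstacle here; the only point that demands care is the bookkeeping of the first paragraph, namely verifying that the \(L^\tau\)-renormalisation absorbs the mixed\hyp{}regularity exponent into the isotropic weight. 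Once this cancellation is seen, the hybrid structure evaporates and the whole question reduces to the classical Stechkin/Hölder dichotomy for best \(N\)-term approximation in a Hilbert space, which is precisely why, as remarked before the statement, the argument simplifies so drastically in the present setting.
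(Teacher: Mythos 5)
Your proposal is correct and follows essentially the same route as the paper: both reduce, via the $L^\tau$-renormalisation and the cancellation $2^{\tau r|\lambdabfm|_1}\,2^{-\tau r|\lambdabfm|_1}=1$, to best $N$-term approximation of the sequence $\big(2^{q|\lambdabfm|_\infty}u_{\lambdabfm}\big)_{\lambdabfm}$ in $\ell^2(\nablabfm)$, proving Jackson by a Stechkin-type estimate (which the paper derives inline through the weak-$\ell^\tau$ quasi-norm) and Bernstein by H\"older's inequality, before invoking Theorem \ref{thm:approximation_interpolation}.
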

\begin{proof}
	In accordance with Theorem \ref{thm:approximation_interpolation},
	we need to show the Jackson and Bernstein inequalities
	\begin{align}
		\label{eq:Jackson}
		\inf_{v_N \in V_N} \| u - v_N \|_{H^q(\square)}
		&\lesssim N^{-r} \|u\|_{\Bfrak^{q, r, \tau}_\tau(\square)}, &u &\in
		\Bfrak^{q,r,\tau}_{\tau}(\square),\\
		\label{eq:Bernstein}
		\|u_N\|_{\Bfrak^{q, r, \tau}_\tau(\square)} &\lesssim N^r \| u_N \|_{H^q(\square)}, 
		&u_N &\in V_N,
	\end{align}	
	to conclude the statement of the theorem.
	In order to do achieve this,
	we use an argument similar to \cite{DeV98,Nit04}.

	Let us show the Jackson inequality \eqref{eq:Jackson} first.
	We note that
	\begin{equation}
		\label{eq:Basis_Psiq}
		\Psi^q \isdef \big\{ 2^{-q |\lambdabfm|_\infty} \psi_{\lambdabfm} : 
		\lambdabfm \in\nablabfm\}
	\end{equation}
	is a Riesz basis for \(H^q(\square)\) due to the norm equivalence 
	\eqref{eq:norm_equivalence}. Thus, the best \(N\)-term approximation 
	to any function \(u\) asymptotically corresponds to the \(N\) terms 
	with the largest coefficients in absolute value. Hence, if \(\ubfm^\star 
	= (\ubfm^{\star}(k))_k\) is a descending (in absolute value) reordering 
	of the sequence \((2^{q|\lambdabfm|_\infty} u_{\lambdabfm})_{\lambdabfm}\), 
	there holds
	\begin{equation*}
		\inf_{v_N \in V_N} \| u - v_N \|_{H^q(\square)}^2 
		\sim \sum_{k = N+1}^{\infty} |\ubfm^{\star}(k)|^2.
	\end{equation*}
	Therefore, it is equivalent to require that the quantity
	\begin{equation*}
		\| \ubfm^\star \|_{\ell^\tau_{\infty}} \isdef 
		\sup_{k \in \Nbbb} k^{\frac{1}{\tau}} |\ubfm^{\star}(k)|,
		\qquad
		\frac{1}{\tau} = r + \frac{1}{2},
	\end{equation*}
	is uniformly bounded. Since the embedding \(\ell^\tau 
	\subset \ell^\tau_{\infty}\) is continuous, we have that
	\begin{align}
		\notag
		\inf_{v_N \in V_N} \| u - v_N \|_{H^q(\square)}^2 
		&\lesssim \sum_{k=N+1}^{^{\infty}} |\ubfm^{\star}(k)|^2
		\lesssim \|\ubfm^{\star}\|_{\ell^{\tau}_{\infty}(\Nbbb)}^2
		\int_{N}^{\infty} t^{-\frac{2}{\tau}} \dint t \\
		\label{eq:proof_Jackson_first}
		&\lesssim N^{-2r}\| \ubfm^{\star} \|_{\ell^\tau(\Nbbb)}^2.
	\end{align}
	By rescaling the coefficients, we obtain that
	\begin{align*}
		\| \ubfm^{\star} \|_{\ell^\tau(\Nbbb)}^\tau 
		&= \sum_{\lambdabfm \in\nablabfm} \big|2 ^{q |\lambdabfm|_\infty}
		u_{\lambdabfm}\big|^\tau
		= \sum_{\jbfm \geq \jbfm_0} 2^{\tau q |\jbfm|_\infty} \sum_{\lambdabfm \in\nablabfm_{\jbfm}}
		\big| 2^{|\jbfm|_1 \big(\frac{1}{\tau} - \frac{1}{2}\big)} u_{\lambdabfm}^{(\tau)}
		\big|^{\tau} \\
		&= \sum_{\jbfm \in \Nbbb_0^2} 2^{\tau(q |\jbfm|_\infty + r|\jbfm|_1)}
		\sum_{\lambdabfm \in\nablabfm_{\jbfm}} \big| u_{\lambdabfm}^{(\tau)} \big|^{\tau} \\
		&= \|u\|_{\Bfrak^{q, r, \tau}_{\tau}(\square)}^{\tau}.
	\end{align*}
	Together with \eqref{eq:proof_Jackson_first},
	this implies \eqref{eq:Jackson}.

	Also for the Bernstein inequality \eqref{eq:Bernstein},
	we will make use of the norm equivalence \eqref{eq:norm_equivalence}.
	Since \(u_N \in V_N\), we may write \(u_N\) as 
	a linear combination
	\begin{equation*}
		u_N = \sum_{\lambdabfm \in\Lambda} u_{\lambdabfm} \psi_{\lambdabfm}
	\end{equation*}
	of at most \(|\Lambda| \leq N\) terms.
	Again, by rescaling, we have that
	\begin{align*}
		\|u_N\|_{\Bfrak^{q, r, \tau}_\tau(\square)}^\tau
		&= \sum_{\lambdabfm \in\Lambda} 2^{\tau(q |\lambdabfm|_{\infty} + r|\lambdabfm|_1)}
		\big| u_{\lambdabfm}^{(\tau)} \big|^{\tau} 
		= \sum_{\lambdabfm \in\Lambda} 2^{\tau(q |\lambdabfm|_{\infty} + r|\lambdabfm|_1)}
		\big| 2^{|\lambdabfm|_1 \big(\frac{1}{2} - \frac{1}{p} \big)} u_{\lambdabfm} \big|^{\tau} \\
		&= \sum_{\lambdabfm \in\Lambda} 2^{\tau q |\lambdabfm|_{\infty}}
		\big| u_{\lambdabfm} \big|^{\tau}
		\lesssim \left( \sum_{\lambdabfm \in\Lambda} 1 \right)^{1 - \frac{\tau}{2}} 
		\left( \sum_{\lambdabfm \in\Lambda} 2^{2q|\jbfm|_\infty}
		\big|u_{\lambdabfm} \big|^2 \right)^{\frac{\tau}{2}} \\
		&\sim N^{r \tau} \cdot \| u \|_{H^q(\square)}^\tau.
	\end{align*}
	By taking the \(\tau\)-th root, we get \eqref{eq:Bernstein}.
\end{proof}

\section{Comparison with Isotropic Nonlinear Approximation}
\label{sec:comparison_anisotropic_isotropic}

As stated in Section \ref{sec:Approximation_Interpolation}, 
for nonlinear approximation with isotropic wavelet bases in
arbitrary dimensions, the Jackson and Bernstein inequalities 
hold for the isotropic 
Besov space \(B^{q + rn, \tau}_\tau(\square)\). Intuitively, this 
space requires more regularity on the functions, since, in 
comparison to Theorem \ref{thm:approximation_spaces_for_wavelets},
\(rn\) additional isotropic derivatives are needed instead of only \(r\) mixed
derivatives. However, the classical function space \(B^{q + rn, \tau}_{\tau}(\square)\)
cannot be characterised by tensor-product wavelets unless \(\tau = 2\), cf.\
\cite{SUV21} and the references therein. In this case, the space 
\(B^{q + rn, 2}_2(\square) = H^{q + rn}(\square)\) is a Sobolev
space and this identity can also be directly concluded from the norm
equivalences \cite{HvR24}.

To the authors' best knowledge, it is not known yet whether the 
classical Besov spaces \(B^{q+sn,p}_{\tau}(\square)\) are included 
in the Besov space of hybrid regularity \(\Bfrak^{q,s,p}_{\tau}(\square)\).
In this section, we will therefore address this question up to a certain point.

\subsection{Change of Bases}

First, we need to find a way to express the isotropic basis functions 
in terms of tensor-product functions. For simplicity, we only treat the 
case \(n = 2\) explicitly again, 
but we emphasise that the main result, 
Theorem \ref{cor:embedding_spaces}, 
carries over to the \(n\)-dimensional 
case as well if \(2s\) is replaced by \(sn\).

Therefore, we consider a set
\begin{align*}
	\Theta_{m}^{\ebfm} = \Theta_{m}^{e_1} \otimes \Theta_{m}^{e_2}.
\end{align*}
If \(e = 1\), then \(\Theta_m^{e} = \Psi_m\), so in this case, the
corresponding tensor factor is already a one-dimensional wavelet.
On the other hand, \eqref{eq:Multiscale_primal} implies that
\begin{align*}
	\big[\Theta_{m-1}^{0}, \Theta_{m}^{1}\big] =  \Theta_{m}^{0} \big[\Mbfm_{m, 0},
	\Mbfm_{m,1}\big].
\end{align*}
Hence, there holds
\begin{align*}
	\big[\Psi_{j_0}, \dots , \Psi_{m-1}\big]\otimes \Psi_{m} &=
	\Theta_{m}^{(0,1)} \big( \Tbfm_{m-1} \otimes \Ibfm\big),\\
	\Psi_m \otimes \big[\Psi_{j_0}, \dots , \Psi_{m-1}\big] &= 
	\Theta_{m}^{(1,0)} \big( \Ibfm \otimes \Tbfm_{m-1} \big),\\
	\Psi_{m}\otimes \Psi_{m} &=  \Theta_{m}^{(1,1)}\big(\Ibfm \otimes \Ibfm\big).
\end{align*}

To estimate the classical Sobolev or Besov regularity of a function \(u\) which
is discretised by tensor-product wavelets, we can therefore write
\begin{align*}
	u = \Psi \ubfm 
	&= \sum_{m = j_0}^{\infty} \left[\sum_{j = 0}^{m-1} \Psi_{(j,m)}
		\ubfm\big|_{\nablabfm_{(j,m)}} + \Psi_{(m,j)}
	\ubfm\big|_{\nablabfm_{(m,j)}} \right] + \Psi_{(m,m)} 
	\ubfm\big|_{\nablabfm_{(m,m)}}\\
	&= \sum_{m = j_0}^{\infty} \Theta_{m}^{(0,1)} \left( \Tbfm_{m-1} \otimes 
	\Ibfm \right) \ubfm\big|_{\bigcup_{j < m} \nabla_j \times \nabla_m}\\
	&\qquad\qquad + \Theta_{m}^{(1,0)} \left(\Ibfm \otimes \Tbfm_{m-1} \right) 
	\ubfm\big|_{\bigcup_{j < m} \nabla_m \times \nabla_j}
	+\Theta_{m}^{(1,1)} 
	\ubfm\big|_{\nabla_m \times \nabla_m}.
\end{align*}
On the other hand, we can also express the tensor-product coefficients in terms
of the isotropic ones, meaning that
\begin{align*}
	u = \Theta \vbfm &= \sum_{m = j_0}^{\infty} 
	\Theta_m^{(0,1)} \vbfm\big|_{\hexagon_m^{(0,1)}} +
	\Theta_m^{(1,0)} \vbfm\big|_{\hexagon_m^{(1,0)}} +
	\Theta_m^{(1,1)} \vbfm\big|_{\hexagon_m^{(1,1)}}\\
	&= \sum_{m = j_0}^{\infty} 
	\big( [\Psi_{j_0} \dots \Psi_{m-1}] \otimes \Psi_m\big)
	(\tilde{\Tbfm}_{m-1}^{\intercal} \otimes \Ibfm)
	\vbfm\big|_{\hexagon_m^{(0,1)}}\\
	&\qquad + \big( \Psi_m \otimes [\Psi_{j_0} \dots \Psi_{m-1}]\big)
	(\Ibfm \otimes \tilde{\Tbfm}_{m-1}^{\intercal})
	\vbfm\big|_{\hexagon_m^{(1,0)}}
	+ \big( \Psi_m \otimes \Psi_m\big) \vbfm\big|_{\hexagon_m^{(1,1)}}.
\end{align*}

Hence, if \(\frac{1}{\tau} = s + \frac{1}{2}\),
any function \(u = \Psi \ubfm = \Theta \vbfm\) satisfies
\begin{align}
	\notag
	\|u\|_{\Bfrak^{q,s,\tau}_{\tau}(\square)}^{\tau}
	&= \|\ubfm^{(\tau)}\|_{\bfrak^{q,s,\tau}_{\tau}}^{\tau}
	= \sum_{\jbfm \geq \jbfm_0} 2^{\tau q|\jbfm|_{\infty} + \tau s|\jbfm|_1}
	\|\ubfm^{(\tau)}\|_{\ell^{\tau}(\nablabfm_{\jbfm})}^{\tau}\\
	\notag
	&= \sum_{m = j_0}^{\infty} \sum_{j = j_0}^{m-1}
	2^{\tau qm + \tau (s+\frac{1}{2} -\frac{1}{\tau})(j+m)}
	\left(\|\ubfm\|_{\ell^{\tau}(\nablabfm_{(j,m)})}^{\tau}
	+\|\ubfm\|_{\ell^{\tau}(\nablabfm_{(j,k)})}^{\tau} \right)\\
	\notag
	&\qquad + 2^{\tau m q + 2\tau m(s +\frac{1}{2} - \frac{1}{\tau})}
	\|\ubfm\|_{\ell^{\tau}(\nablabfm_{(m,m)})}^{\tau}\\
	\notag
	&= \sum_{m = j_0}^{\infty} 
	2^{\tau qm}
	\left(
		\big\|(\tilde{\Tbfm}_{m-1}^{\intercal} \otimes \Ibfm)
		\vbfm|_{\hexagon_{m}^{(0,1)}}\big\|_{\ell^{\tau}(\bigcup_{j < m}
		\nabla_j \times \nabla_m)}^{\tau}
	\right. \\
	&\qquad + \left.
		\big\|(\Ibfm \otimes \tilde{\Tbfm}_{m-1}^{\intercal})
		\vbfm|_{\hexagon_{m}^{(1,0)}}\big\|_{\ell^{\tau}(\bigcup_{j < m}
		\nabla_m \times \nabla_j)}^{\tau}
	\right) + 
	2^{\tau mq}
	\|\vbfm\|_{\ell^{\tau}(\hexagon_{m}^{(1,1)})}^{\tau},
	\label{eq:Besov_norm_coordinate_transform}
\end{align}
and conversely, also
\begin{align}
	\notag
	\|u\|_{B^{\alpha,\tau}_{\tau}(\square)}^{\tau}
	&= \|\vbfm^{(\tau)}\|_{\bbfm^{\alpha,\tau}_{\tau}}^{\tau}
	= \sum_{m = j_0}^{\infty} 2^{\tau\alpha m}
	\|\vbfm^{(\tau)}\|_{\ell^{\tau}(\hexagon_m)}^{\tau}\\
	\notag
	&= \sum_{m=j_0}^{\infty} 2^{\tau m (\alpha + 2(\frac{1}{2} -
	\frac{1}{\tau}))} \left( \|\vbfm\|_{\ell^{\tau}(\hexagon_m^{(0,1)})}
		+\|\vbfm\|_{\ell^{\tau}(\hexagon_m^{(1,0)})}
	+\|\vbfm\|_{\ell^{\tau}(\hexagon_m^{(1,1)})}\right)\\
	\notag
	&= \sum_{m = j_0}^{\infty} 2^{\tau m (\alpha - 2s)}
	\left( \big\|(\Tbfm_{m-1} \otimes \Ibfm) \ubfm\big|_{\bigcup_{j<m}
		\nabla_j \times \nabla_m}
	\big\|_{\ell^{\tau}(\hexagon_m^{(0,1)})}^{\tau} \right.\\
	&\qquad + \left. \big\|(\Ibfm \otimes \Tbfm_{m-1})  \ubfm\big|_{\bigcup_{j<m}
		\nabla_m \times \nabla_j}
		\big\|_{\ell^{\tau}(\hexagon_m^{(1,0)})}^{\tau} 
	+ \|\ubfm\|_{\ell^{\tau}(\nablabfm_{(m,m)})}^{\tau}\right).
	\label{eq:Besov_norm_inverse_transform}
\end{align}

\subsection{Comparison of Approximation Spaces}
\label{sec:comparison_spaces}

The goal of this section is to compare the classical approximation spaces
\(B^{q+sn,\tau}_{\tau}(\square)\) with the approximation spaces for
tensor-product wavelets \(\Bfrak^{q,s,\tau}_{\tau}(\square)\). We will see that
there is a range of regularity spaces whose elements can be approximated by
tensor-product wavelets but not by isotropic wavelets.

If a bit of additional smoothness is available, 
with the help of \cite{ACJRV15}, we can immediately conclude that
the approximation with tensor-product wavelets performs
at least as well.
\begin{proposition}
	\label{prop:embeddings_epsilon}
	For any \(\epsilon > 0\), \(s \geq 0\), and \(u = \Psi \ubfm\), there holds
	with \(\frac{1}{\tau} = s + \frac{1}{2}\),
	\begin{equation*}
		\|u\|_{B^{q + s - \epsilon, \tau}_{\tau}(\square)}
		\lesssim \|u\|_{\Bfrak^{q,s,\tau}_{\tau}(\square)}
		\lesssim \|u\|_{B^{q + 2s + \epsilon, \tau}_{\tau}(\square)}.
	\end{equation*}
\end{proposition}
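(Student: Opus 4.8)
The plan is to read off both embeddings directly from the two coordinate-transform identities \eqref{eq:Besov_norm_coordinate_transform} and \eqref{eq:Besov_norm_inverse_transform}, which already express the hybrid seminorm through the isotropic coefficients \(\vbfm\) and, conversely, the isotropic seminorm through the tensor-product coefficients \(\ubfm\). In both directions the level exponents collapse thanks to the calibration \(\frac{1}{\tau} = s + \frac{1}{2}\), so that after inserting the operator bounds of Lemma \ref{lm:tranform_linfty} the whole matter reduces to comparing two weighted \(\ell^\tau\)-sums level by level in the isotropic scale \(m = |\jbfm|_\infty\); the parameter \(\epsilon\) then only has to absorb the sign of a single exponent. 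Throughout I would fix \(n = 2\), exactly as in the derivation of \eqref{eq:Besov_norm_coordinate_transform}.

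For the upper estimate \(\|u\|_{\Bfrak^{q,s,\tau}_\tau(\square)} \lesssim \|u\|_{B^{q+2s+\epsilon,\tau}_\tau(\square)}\) I would start from \eqref{eq:Besov_norm_coordinate_transform}. Each of the three blocks there has the shape \((\tilde\Tbfm_{m-1}^\intercal \otimes \Ibfm)\vbfm|_{\hexagon_m^{\ebfm}}\), and Lemma \ref{lm:tranform_linfty} gives \(\|\tilde\Tbfm_{m-1}^\intercal\|_\tau \lesssim 1\) uniformly in \(m\). Hence \(\|(\tilde\Tbfm_{m-1}^\intercal \otimes \Ibfm)\mathbf{x}\|_{\ell^\tau} \lesssim \|\mathbf{x}\|_{\ell^\tau}\), and summing the three sectors yields \(\|u\|_{\Bfrak^{q,s,\tau}_\tau(\square)}^\tau \lesssim \sum_{m \geq j_0} 2^{\tau q m}\|\vbfm|_{\hexagon_m}\|_{\ell^\tau}^\tau\). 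Passing to the \(L^\tau\)-normalised coefficients via \(v_\mu = 2^{2ms}v_\mu^{(\tau)}\) (since \(\frac{1}{2}-\frac{1}{\tau} = -s\) for \(\mu \in \hexagon_m\)) converts the right-hand side into \(\sum_m 2^{\tau m(q+2s)}\|\vbfm^{(\tau)}|_{\hexagon_m}\|_{\ell^\tau}^\tau\), which is \(\leq \|u\|_{B^{q+2s+\epsilon,\tau}_\tau(\square)}^\tau\) because \(2^{-\tau\epsilon m}\leq 1\).

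The lower estimate \(\|u\|_{B^{q+s-\epsilon,\tau}_\tau(\square)} \lesssim \|u\|_{\Bfrak^{q,s,\tau}_\tau(\square)}\) runs symmetrically from \eqref{eq:Besov_norm_inverse_transform}, except that the transform now appears in its expensive direction \(\Tbfm_{m-1}\), for which Lemma \ref{lm:tranform_linfty} only yields \(\|\Tbfm_{m-1}\|_\tau \lesssim 2^{ms}\). Inserting this with \(\alpha = q+s-\epsilon\), the prefactor \(2^{\tau m(\alpha-2s)}\) combines with \(2^{ms\tau}\) to \(2^{\tau m(q-\epsilon)}\) on the two off-diagonal blocks and stays \(2^{\tau m(q-s-\epsilon)}\) on the diagonal block \(\nablabfm_{(m,m)}\); since \(\ubfm|_{\nablabfm_{(j,m)}}\) carries the hybrid weight \(2^{\tau q m}\) (because \(|\jbfm|_\infty = m\)), every term is dominated once \(\epsilon, s \geq 0\). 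The one point that needs care — and which I expect to be the main technical obstacle — is that Lemma \ref{lm:kron_prod_norms} supplies the Kronecker identity \(\|\Abfm\otimes\Bbfm\|_p = \|\Abfm\|_p\|\Bbfm\|_p\) only for \(p \in \{1,2,\infty\}\), whereas here \(p = \tau\) is an arbitrary value in \((0,2]\). For the factor \(\Abfm\otimes\Ibfm\) the gap closes directly: for \(\tau \leq 1\) Lemma \ref{lm:matrix_p_norm} gives \(\|\Abfm\otimes\Ibfm\|_\tau^\tau \leq \|\Abfm^{\odot\tau}\|_1\,\|\Ibfm^{\odot\tau}\|_1 = \|\Abfm^{\odot\tau}\|_1\), which is precisely the quantity already estimated in the proof of Lemma \ref{lm:tranform_linfty}, while for \(1 \leq \tau \leq 2\) one interpolates between \(p=1\) and \(p=2\). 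Once this Kronecker-with-identity bound is available both chains of inequalities close, and one may alternatively invoke \cite{ACJRV15} to obtain the same nesting directly.
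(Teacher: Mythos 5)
Your proposal is correct in substance, but it takes a genuinely different route from the paper: for Proposition \ref{prop:embeddings_epsilon} the paper gives no proof at all --- the statement is obtained as an immediate consequence of the embeddings between hyperbolic and classical Besov spaces established in \cite{ACJRV15}, which is why the \(\epsilon\)-loss (or, as the following remark notes, a logarithmic correction) appears in the first place. What you do instead is run the coordinate-transform argument through \eqref{eq:Besov_norm_coordinate_transform} and \eqref{eq:Besov_norm_inverse_transform} with the operator bounds of Lemma \ref{lm:tranform_linfty} and the Kronecker-with-identity reduction --- and this is, almost verbatim, the paper's proof of the \emph{subsequent} Theorem \ref{cor:embedding_spaces}, which establishes the sharp embeddings \(B^{q+sn,\tau}_{\tau}\hookrightarrow\Bfrak^{q,s,\tau}_{\tau}\hookrightarrow B^{q+s,\tau}_{\tau}\) with no \(\epsilon\) at all. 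Indeed, in your argument the \(\epsilon\) is vacuous: both chains of inequalities close already for \(\epsilon=0\), so you have in effect reproved the stronger theorem for \(n=2\) and then discarded the sharpness. The trade-off is the hypotheses. The citation route needs nothing beyond what \cite{ACJRV15} assumes and covers all \(s\geq 0\); your route silently imports the standing assumptions of Theorem \ref{cor:embedding_spaces}, namely the decay estimate \eqref{eq:decay_behaviour_scaling} with \(s<\alpha-\frac{1}{2}\) (so that \(\tau>\nicefrac{1}{\alpha}\) and Lemma \ref{lm:tranform_linfty} is applicable), and the conditions \(q+s,\,q+2s>\max\{0,\,2(s-\frac{1}{2})\}\) needed for the discrete seminorms \(\|\vbfm^{(\tau)}\|_{\bbfm^{\cdot,\tau}_{\tau}}\) to actually characterise the classical Besov spaces appearing in the statement. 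Under those restrictions your proof is complete and even yields more than claimed; without them it does not cover the full range of the proposition as stated, which is precisely the gap the external reference is meant to fill.
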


\begin{remark}
	Proposition \ref{prop:embeddings_epsilon} can be improved by replacing the
	\(\epsilon\) by a logarithmic correction weight.
	In order to do so, Besov spaces with logarithmic correction need to be
	introduced, cf.\ \cite{ACJRV15}.
	Moreover, note that the results \cite{ACJRV15} in a two-dimensional
	setting.
\end{remark}

Proposition \ref{prop:embeddings_epsilon} immediately implies that 
there are a lot of functions which satisfy
\(u \in \Afrak^s\big(H^{q}(\square)\big)\)
but \(u \notin A^s\big(H^{q}(\square)\big)\), 
where \(\Afrak^s\) and \(A^s\) denote 
the approximation spaces with respect to the tensor-product and 
the isotropic wavelet bases, respectively.
In particular, all functions which admit slightly more regularity than
minimally required to be in \(A^s\big(H^{q}(\square)\big)\) are also in
\(\Afrak^s\big(H^{q}(\square)\big)\).
In the next theorem, we will however show that no additional
regularity is required.
\begin{theorem}
	\label{cor:embedding_spaces}
	Suppose that the primal and dual scaling functions and 
	wavelets satisfy the asymptotic decay estimate
	\eqref{eq:decay_behaviour_scaling}
	for some \(\alpha > 1\).
	Moreover, let \(s < \alpha - \frac{1}{2}\) and
	\(q+s, q+sn > \max\{0,\, n(s - \frac{1}{2})\}\). 
	Then, for \(\frac{1}{\tau} = s +\frac{1}{2}\), 
	the embeddings 
	\begin{equation*}
		B^{q+sn,\tau}_{\tau}(\square) \hookrightarrow
		\Bfrak^{q,s,\tau}_{\tau}(\square)\hookrightarrow
		B^{q+s,\tau}_{\tau}(\square)
	\end{equation*}
	are continuous.
\end{theorem}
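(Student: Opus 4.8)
The plan is to reduce both embeddings to a level-by-level comparison of the block sums appearing in the coordinate-transform identities \eqref{eq:Besov_norm_coordinate_transform} and \eqref{eq:Besov_norm_inverse_transform}. The decisive simplification is that the choice $\frac1\tau = s+\frac12$ makes $s+\frac12-\frac1\tau=0$, so that, in $L^2$-normalised coefficients, the hybrid norm collapses to $\|u\|_{\Bfrak^{q,s,\tau}_\tau(\square)}^\tau \sim \sum_{m\ge j_0} 2^{\tau q m}\sum_{|\jbfm|_\infty = m}\|\ubfm\|_{\ell^\tau(\nablabfm_{\jbfm})}^\tau$, while the $L^\tau$-normalisation factors in the two classical Besov norms collapse correspondingly ($B^{q+sn}$ loses its $sn$, and $B^{q+s}$ its $-ns$ term). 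Before starting I would record that the hypothesis $s<\alpha-\frac12$ is exactly $\frac1\alpha<\tau\le 2$, so that Lemma \ref{lm:tranform_linfty} applies to $\Tbfm_{m-1}$ and $\tilde{\Tbfm}_{m-1}^\intercal$, and that the two-sided conditions $q+s,\,q+sn>\max\{0,\,n(s-\frac12)\}$ (note $n(\frac1\tau-1)=n(s-\frac12)$) guarantee that the discrete wavelet seminorms used in \eqref{eq:Besov_norm_coordinate_transform}–\eqref{eq:Besov_norm_inverse_transform} are equivalent to the genuine classical Besov norms.

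For the upper embedding $B^{q+sn,\tau}_\tau(\square)\hookrightarrow \Bfrak^{q,s,\tau}_\tau(\square)$ I would start from \eqref{eq:Besov_norm_coordinate_transform}, which writes the hybrid norm through the isotropic coefficients $\vbfm$ and the dual transforms $\tilde{\Tbfm}_{m-1}^\intercal$. The key observation is that $\tilde{\Tbfm}_{m-1}^\intercal\otimes\Ibfm$ acts block-diagonally: for each fixed second index it applies $\tilde{\Tbfm}_{m-1}^\intercal$ to a slice, whence $\|(\tilde{\Tbfm}_{m-1}^\intercal\otimes\Ibfm)\vbfm\|_{\ell^\tau}^\tau\le \|\tilde{\Tbfm}_{m-1}^\intercal\|_\tau^\tau\,\|\vbfm\|_{\ell^\tau}^\tau$, where submultiplicativity of the operator quasi-norm is immediate from its definition even when $\tau<1$. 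Since $\|\tilde{\Tbfm}_{m-1}^\intercal\|_\tau\lesssim 1$ by Lemma \ref{lm:tranform_linfty}, the level-$m$ contribution of the hybrid norm is bounded by $2^{\tau q m}\|\vbfm\|_{\ell^\tau(\hexagon_m)}^\tau$, which is precisely the level-$m$ term of $\|u\|_{B^{q+sn,\tau}_\tau(\square)}^\tau$ after the $sn$ cancellation. Summation over $m$ then yields the embedding.

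For the lower embedding $\Bfrak^{q,s,\tau}_\tau(\square)\hookrightarrow B^{q+s,\tau}_\tau(\square)$ I would instead use \eqref{eq:Besov_norm_inverse_transform} with $\alpha=q+s$, so that the scale weight is $2^{\tau m(\alpha-2s)}=2^{\tau m(q-s)}$ (and $2^{\tau m(q-(n-1)s)}$ in dimension $n$). Applying the same block-diagonal argument to $\Tbfm_{m-1}\otimes\Ibfm$ and $\Ibfm\otimes\Tbfm_{m-1}$, but now invoking the growing bound $\|\Tbfm_{m-1}\|_\tau\lesssim 2^{(m-1)s}$ from Lemma \ref{lm:tranform_linfty}, every transformed block gains a factor $2^{\tau ms}$ that exactly absorbs the deficit in the weight; the untransformed diagonal block $\nablabfm_{(m,m)}$ already carries a weight $2^{\tau m(q-s)}\le 2^{\tau mq}$. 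Each level-$m$ term of $\|u\|_{B^{q+s,\tau}_\tau(\square)}^\tau$ is thereby dominated by $2^{\tau q m}\sum_{|\jbfm|_\infty = m}\|\ubfm\|_{\ell^\tau(\nablabfm_{\jbfm})}^\tau$, i.e.\ the hybrid level-$m$ term, and summation closes the estimate.

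The main obstacle is really bookkeeping rather than analysis: one must match the powers of two coming from the $L^\tau$-normalisation ($2^{\mp nms}$), from the Besov scale ($2^{\pm\tau m\alpha}$), and from the transform-norm bounds ($2^{\tau ms}$ or $1$) so that they telescope to the single weight $2^{\tau q m}$ on each level — the identity $\frac1\tau=s+\frac12$ is what makes every such balance exact. The one genuinely delicate point is that we work in the quasi-Banach range $\tau<1$, where only the $\tau$-subadditivity of Lemma \ref{lm:matrix_p_norm} and the operator quasi-norm bounds of Lemma \ref{lm:tranform_linfty} are available; but both the block-diagonal reduction and Lemma \ref{lm:tranform_linfty} were stated precisely for this range, so no further machinery is needed. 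The general $n$-dimensional case runs identically once $2s$ is replaced by $ns$ and one sums over the $2^{n}-1$ Kronecker blocks $\Theta_m^{\ebfm}$, each carrying as many factors $\Tbfm_{m-1}$ as it has vanishing entries in $\ebfm$.
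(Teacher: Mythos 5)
Your proposal is correct and follows essentially the same route as the paper's proof: both start from the coordinate-transform identities \eqref{eq:Besov_norm_coordinate_transform} and \eqref{eq:Besov_norm_inverse_transform}, bound the Kronecker factors \(\tilde{\Tbfm}_{m-1}^{\intercal}\otimes\Ibfm\) and \(\Tbfm_{m-1}\otimes\Ibfm\) in the \(\ell^{\tau}\) operator quasi-norm via Lemma \ref{lm:tranform_linfty} (uniformly bounded on the dual side, growing like \(2^{\tau m s}\) on the primal side), and then match the resulting weights to the single factor \(2^{\tau q m}\) per level. The only cosmetic difference is that you justify the Kronecker-product bound directly from the block-diagonal structure, whereas the paper routes it through Lemma \ref{lm:matrix_p_norm} and Lemma \ref{lm:kron_prod_norms} for \(\tau<1\) and through interpolation for \(1\le\tau\le2\); the substance is identical.
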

\begin{proof}
	Since \(q+s, q+sn > \max\{0,\, n(\frac{1}{\tau}-1)\}\),
	both \(B^{q+s,\tau}_{\tau}(\square)\) and
	\(B^{q+sn,\tau}_{\tau}(\square)\)
	can be characterised by a sufficiently regular wavelet basis,
	cf.\ \cite{Coh03}.
	Therefore, it suffices to show that, for \(u = \Psi \ubfm = \Theta \vbfm\),
	\begin{equation*}
		\|\vbfm^{(\tau)}\|_{\bbfm^{q+s,\tau}_{\tau}}
		\lesssim 
		\|\ubfm^{(\tau)}\|_{\bfrak^{q,s,\tau}_{\tau}}
		\lesssim
		\|\vbfm^{(\tau)}\|_{\bbfm^{q+sn,\tau}_{\tau}}.
	\end{equation*}

	Let us first set \(n = 2\) and consider \(u \in
	B^{q+2s,\tau}_{\tau}(\square)\).
	We want to
	apply a suitable coordinate transforms to derive the estimate
	\begin{align*}
		\|\ubfm^{(\tau)}\|_{\bfrak^{q,s,\tau}_{\tau}} \lesssim
		\|\vbfm^{(\tau)}\|_{\bbfm^{q+2s,\tau}_{\tau}} = \left[ \sum_{m = 0}^{\infty}
			2^{\tau m (q + 2s)} \sum_{\mu \in \hexagon_m} 
			\big|v_{\mu}^{(\tau)}\big|^{\tau}
		\right]^{\frac{1}{\tau}}.
	\end{align*}
	In view of \eqref{eq:Besov_norm_coordinate_transform}, 
	there holds
	\begin{align*}
		\|\ubfm^{(\tau)}\|_{\bfrak^{q,s,\tau}_{\tau}}^{\tau} 
		&= \sum_{m = j_0}^{\infty} 
		2^{\tau m q}
		\left(\big\|(\tilde{\Tbfm}_{m-1}^{\intercal} \otimes \Ibfm)
			\vbfm|_{\hexagon_m^{(0,1)}}\big\|_{\ell^{\tau}(
			\bigcup_{j < m} \nabla_j \times \nabla_m)}^{\tau}
		\right. \\
		&\qquad + \left.
			\big\|(\Ibfm \otimes \tilde{\Tbfm}_{m-1}^{\intercal})
			\vbfm|_{\hexagon_m^{(1,0)}}\big\|_{\ell^{\tau}(\bigcup_{j < m} \nabla_m
			\times \nabla_j)}^{\tau}
		\right) + 
		2^{\tau mq}\|\vbfm\|_{\ell^{\tau}(\hexagon_{m}^{(1,1)})}^{\tau}.
	\end{align*}

	If \(\tau < 1\),
	we have due to Lemma
	\ref{lm:matrix_p_norm} and Lemma \ref{lm:kron_prod_norms}
	\begin{align*}
		\big\|\tilde{\Tbfm}_{m-1}^{\intercal} \otimes \Ibfm
		\big\|_{\tau}^{\tau}
		\leq
		\big\|\big(\tilde{\Tbfm}_{m-1}^{\odot
		\tau} \otimes \Ibfm \big)^{\intercal} \big\|_{1}
		\leq \big\|\big(\tilde{\Tbfm}_{m-1}^{\odot
		\tau}\big)^{\intercal} \big\|_{1},
	\end{align*}
	which can be shown to be uniformly bounded using the arguments of the proof
	of Lemma \ref{lm:tranform_linfty}, 
	as \(s < \alpha - \frac{1}{2}\) ensures that 
	\(\tau > \frac{1}{\alpha}\).
	If \(1 \leq \tau \leq 2\), and \(\frac{1}{\tau} = \frac{1-\theta}{1} +
	\frac{\theta}{2}\), interpolation and Lemma \ref{lm:kron_prod_norms} yield
	\begin{align*}
		\big\|\tilde{\Tbfm}_{m-1}^{\intercal} \otimes \Ibfm
		\big\|_{\tau}^{\tau}
		&\leq 
		\big\|\tilde{\Tbfm}_{m-1}^{\intercal} \otimes \Ibfm
		\big\|_{1}^{(1-\theta)\tau} \cdot
		\big\|\tilde{\Tbfm}_{m-1}^{\intercal} \otimes \Ibfm
		\big\|_{2}^{\theta\tau}\\
		&\leq
		\big\|\tilde{\Tbfm}_{m-1}^{\intercal}
		\big\|_{1}^{(1-\theta)\tau} \cdot
		\big\|\tilde{\Tbfm}_{m-1}^{\intercal}
		\big\|_{2}^{\theta\tau},
	\end{align*}
	which is uniformly bounded by Lemma \ref{lm:tranform_linfty}.
	After applying the same estimate to \(\Ibfm \otimes
	\tilde{\Tbfm}_{m-1}^{\intercal}\),
	we can conclude that
	\begin{align*}
		\|\ubfm^{(\tau)}\|_{\bfrak^{q,s,\tau}_{\tau}}^{\tau}
		&\lesssim \sum_{m=j_0}^{\infty}
		2^{\tau m q} \|\vbfm\|_{\ell^{\tau}(\hexagon_m)}^{\tau}
		= \sum_{m = j_0}^{\infty} 2^{\tau m (q + 2s)}
		\|\vbfm^{(\tau)}\|_{\ell^{\tau}(\hexagon_m)}^{\tau},
	\end{align*}
	which is what we wanted to show.

	For the other estimate, we use \eqref{eq:Besov_norm_inverse_transform}
	to see that
	\begin{align}
		\notag
		\|\vbfm^{(\tau)}\|_{\bbfm^{q+s,\tau}_{\tau}}
		&= \sum_{m = j_0}^{\infty} 2^{\tau m (q-s)}
		\left( \big\|(\Tbfm_{m-1} \otimes \Ibfm) \ubfm\big|_{\bigcup_{j<m}
			\nabla_j \times \nabla_m}
		\big\|_{\ell^{\tau}(\hexagon_m^{(0,1)})}^{\tau} \right.\\
		&\qquad + \left. \big\|(\Ibfm \otimes \Tbfm_{m-1})  \ubfm\big|_{\bigcup_{j<m}
			\nabla_m \times \nabla_j}
			\big\|_{\ell^{\tau}(\hexagon_m^{(1,0)})}^{\tau} 
		+ \|\ubfm\|_{\ell^{\tau}(\nablabfm_{(m,m)})}^{\tau}\right).
		\label{eq:inverse_embedding_first}
	\end{align}
	Also here, by using the arguments of the Lemmata \ref{lm:matrix_p_norm},
	\ref{lm:kron_prod_norms}, and \ref{lm:tranform_linfty}, we see that
	\begin{equation*}
		\big\|\Tbfm_{m-1} \otimes \Ibfm \big\|_{\tau}^{\tau}
		\leq \big\|\Tbfm_{m-1}^{\odot \tau} \otimes \Ibfm
		\big\|_{1} \leq \big\|\Tbfm_{m-1}^{\odot \tau} \big\|_{1}
		\lesssim 2^{\tau m (\frac{1}{\tau} - \frac{1}{2})}
		= 2^{\tau m s},
	\end{equation*}
	because \(\frac{1}{\alpha} <\tau \leq 1\).
	If \(1 \leq \tau \leq 2\) and \(\frac{1}{\tau} = \frac{1-\theta}{1} +
	\frac{\theta}{2}\), meaning that \(1-\theta = 2s\), 
	we may interpolate again to deduce that
	\begin{align*}
		\big\|\Tbfm_{m-1} \otimes \Ibfm \big\|_{\tau}^{\tau}
		\leq 
		\big\|\Tbfm_{m-1} \otimes \Ibfm \big\|_{1}^{2s\tau} \cdot 
		\big\|\Tbfm_{m-1} \otimes \Ibfm \big\|_{2}^{\theta\tau}
		\lesssim 
		\big\|\Tbfm_{m-1}\big\|_{1}^{2s\tau} 
		\lesssim 2^{\tau m s}.
	\end{align*}
	Since this applies for \(\Ibfm \otimes \Tbfm_{m-1}\) in the same style,
	there finally holds
	\begin{align*}
		\|\vbfm^{(\tau)}\|_{\bbfm^{q+s,\tau}_{\tau}} \lesssim \sum_{m=j_0}^{\infty}
		2^{\tau m q} \sum_{|\jbfm|_{\infty} = m}
		\|\ubfm\|_{\ell^{\tau}(\nablabfm_{\jbfm})}^{\tau}
		= \sum_{\jbfm \geq \jbfm_0} 2^{\tau q |\jbfm|_\infty + \tau s
		|\jbfm|_1}
		\|\ubfm^{(\tau)}\|_{\ell^{\tau}(\nablabfm_{\jbfm})}^{\tau}.
	\end{align*}

	In the case \(n \geq 2\), we apply Lemma \ref{lm:kron_prod_norms}
	recursively to matrices arising from an \(n\)-fold tensor products.
	Since \(\|(\tilde{\Tbfm}_{m}^{\odot \tau})^{\intercal}\|_{1}\)
	is uniformly bounded, the right estimate follows.

	For the left estimate, we need to bound tensor products of at most
	\((n-1)\) tensor factors \(\Tbfm_{m-1}\) and at least one identity,
	resulting in a norm asymptotically bounded by \(2^{\tau m (n-1)s}\).
	On the other hand, the weight in \eqref{eq:inverse_embedding_first} results
	from the rescaling of the coefficients from
	\(L^{\tau}(\square)\) to \(L^{2}(\square)\), 
	which gives rise to a factor \(2^{\tau m (q - (n-1)s)}\)
	in \(n\) dimensions.
	Hence, this multiplies to \(2^{\tau m q}\), as in the two-dimensional case.
\end{proof}
\begin{remark}
	The maximal approximation rate \(s\) is limited by the decay behaviour of
	the primal and dual scaling functions.
	However, if compactly supported wavelets are used, \(\alpha\), and therefore
	also \(s\), can be chosen arbitrarily large, by which this limitation can be
	mitigated.
	The same holds true for wavelets that decay faster than any polynomial.
\end{remark}

\section{Conclusion}
\label{sec:conclusion}

We have extended the wavelet characterisations of the hybrid regularity Sobolev
spaces \(\Hfrak^{q,s}(\square)\). Therefrom outgoing, we have shown that the
approximation spaces \(\Afrak^s(H^{q}(\square))\), with respect to
tensor-product wavelets, correspond to sequences in
\(\bfrak^{q,s,\tau}_{\tau}\) with \(\frac{1}{\tau} = s +\frac{1}{2}\). 
These sequence spaces
characterise the seminorms of the Besov spaces of
hybrid regularity \(\Bfrak^{q,s,\tau}_{\tau}(\square)\).
Finally, we have shown by elementary coordinate transforms that all functions
in \(B^{q+sn,\tau}_{\tau}(\square) \subset A^s(H^{q}(\square))\) can also be
approximated at least at the same rate 
\(N^{-s}\) by \(N\)-term tensor-product wavelets.
Although this seems natural, this was not known up to now. 
Moreover, for positive regularity, we have shown the embedding
\(B^{q+sn,\tau}_{\tau}(\square) \hookrightarrow
\Bfrak^{q,s,\tau}_{\tau}(\square)\), meaning that an isotropic space is included
in a space of dominating mixed smoothness.

On the other hand, also the other natural embedding
\(\Bfrak^{q,s,\tau}_{\tau} \hookrightarrow B^{q+s,\tau}_{\tau}(\square)\)
was shown to be continuous.
In all these proofs, merely estimates on the coordinate transforms between
tensor-product wavelets and isotropic wavelets have been used.
To the authors' best knowledge, this technique has, up to now, not been applied
to investigate Besov spaces of hybrid regularity. 
Therefore, it might provide new insight into a
whole range of function spaces. 

\subsection*{Acknowledgement}
The authors thank B\'{e}atrice Vedel (Universit\'{e} de Bretagne Sud) 
and Tino Ullrich (Technische Universit\"{a}t Chemnitz) 
for fruitful discussions and
putting the articles \cite{ACJRV15,SUV21} into the authors' attention.

\bibliographystyle{plain}
\bibliography{literature}

\end{document}